\numberwithin{equation}{section}
\def\N{{\mathbb N}}
\def\P{{\mathbb P}}
\newtheorem{theorem}{Theorem}[section]
\newtheorem{lemma}[theorem]{Lemma}
\newtheorem{proposition}[theorem]{Proposition}
\newtheorem{corollary}[theorem]{Corollary}
\theoremstyle{definition}
\newtheorem{remark}[theorem]{Remark}
\newtheorem{convention and reminder}[theorem]{Convention and Reminder}
\newtheorem{convention and remark}[theorem]{Convention and Remark}
\newtheorem{definition and remark}[theorem]{Definition and Remark}
\newtheorem{reminders and definition}[theorem]{Reminders and Definition}
\newtheorem{notation and remarks}[theorem]{Notation and Remarks}
\newtheorem{notation and remark}[theorem]{Notation and Remark}
\newtheorem{example}[theorem]{Example}
\newtheorem{conjecture}[theorem]{Conjecture}
\newcommand\Ker{\operatorname{\Ker}}
\newcommand\Supp{\operatorname{Supp}}
\newcommand\PGL{\operatorname{PGL}}
\def\kk{{\Bbbk}}
\def\QR{\mathsf{QR}}
\def\1{\vec{1}}
\def\0{\vec{0}}
\title[Rank 3 quadratic generators of Veronese embeddings]{Rank 3 quadratic generators of Veronese embeddings}
\author[K. Han]{Kangjin Han}
\email{kjhan@dgist.ac.kr}
\address{
School of Undergraduate Studies, Daegu-Gyeongbuk Institute of
Science \& Technology (DGIST), Daegu 42988, Republic of Korea}
\author[W. Lee]{Wanseok Lee}
\email{wslee@pknu.ac.kr}
\address{Department of Applied Mathematics, Pukyong National University, Busan 608-737, Republic of Korea}
\author[H. Moon]{Hyunsuk Moon}
\email{mhs@kaist.ac.kr}
\address{Korea Advanced Institute of Science and Technology, Daejeon 34141, Republic of Korea}
\author[E. Park]{Euisung Park}
\email{euisungpark@korea.ac.kr}
\address{Department of Mathematics, Korea University, Seoul 136-701, Republic of Korea}
\begin{document}

\keywords{Low rank quadrics, Veronese variety, Veronese
re-embedding, determinantal presentation, property $\QR(k)$}
\subjclass[2010]{13D02, 14A25, 14N05, 14M12, 14M17}

\begin{abstract}
Let $L$ be a very ample line bundle on a projective
scheme $X$ defined over an algebraically closed field $\Bbbk$ with
${\rm char}~\Bbbk \neq 2$. We say that $(X,L)$ satisfies
\textit{property} $\QR(k)$ if the homogeneous ideal of the linearly
normal embedding $X \subset \P H^0 (X,L)$ can be generated by quadrics
of rank $\leq k$. Many classical varieties such as Segre-Veronese embeddings, rational normal scrolls and curves of
high degree satisfy property $\QR(4)$.

In this paper, we first prove that if ${\rm char}~\Bbbk \neq 3$ then $(\P^n , \mathcal{O}_{\P^n} (d))$ satisfies property $\QR(3)$ for all $n \geq 1$ and $d \geq 2$.
We also investigate an asymptotic behavior of property $\QR(3)$ for any projective scheme. Namely,
we prove that $(i)$ if $X \subset \P H^0 (X,L)$ is $m$-regular then $(X,L^d )$ satisfies property $\QR(3)$
for all $d \geq m$ and $(ii)$ if $A$ is an ample line bundle on $X$ then $(X,A^d )$ satisfies property $\QR(3)$
for all sufficiently large even number $d$. These results provide an affirmative evidence for the expectation that
property $\QR(3)$ holds for all sufficiently ample line bundles on $X$, as in the cases of Green-Lazarsfeld's
condition $\mathrm{N}_p$ and Eisenbud-Koh-Stillman's determininantal presentation in \cite{EKS}.
Finally, when ${\rm char}~\Bbbk = 3$ we prove that $(\P^n , \mathcal{O}_{\P^n} (2))$ fails to satisfy
property $\QR(3)$ for all $n \geq 3$.
\end{abstract}

\maketitle \tableofcontents \setcounter{page}{1}

\section{Introduction}

\noindent The study about the interaction between geometric
properties of a projective variety $X \subset \P^n$ and structural
properties of its defining ideal $I(X)$ is one of the central issues
in algebraic geometry. For the last few decades, the problem of
giving conditions to guarantee that $I(X)$ is generated by quadrics
and the first few syzygy modules are generated by linear syzygies
have attracted considerable attention (cf. \cite{Gre1}, \cite{Gre2},
\cite{GL}, \cite{EL}, \cite{GP}, \cite{Ina}, etc). Another important
direction to study structures of defining equations of $X$ is to
examine whether $I(X)$ is defined by $2$-minors of one or several
linear determinantal presentations (cf. \cite{EKS}, \cite{Pu},
\cite{Ha}, \cite{B}, \cite{SS}, etc). In such a case, $I(X)$ admits
a generating set consisting of quadrics of rank $\leq 4$. The main
purpose of this paper is to exhibit many cases where $I(X)$ is
generated by quadrics of rank $3$. In particular, we prove that every Veronese embedding of $\P^n$ by
$\mathcal{O}_{\P^n} (d)$ has such a property unless ${\rm
char}~\Bbbk = 3$.

To state our results precisely, we begin with some notation and
definitions. Let $X \subset \P^n$ be any projective scheme over an algebraically closed field $\Bbbk$.
Through this paper, we assume that ${\rm char}~\Bbbk \neq 2$ since
the rank of a quadric is not well-defined over characteristic $2$
(e.g. $xy$). We say that \textit{$X \subset \P^n$ satisfies property
$\QR (k)$} if $I(X)$ is generated by quadrics of rank at most $k$.
For a very ample line bundle $L$ on a projective scheme $X$, we
will say that \textit{$L$ satisfies property $\QR (k)$} when so does
the linearly normal embedding $X \subset \P H^0 (X,L)$.

We can reinterpret property $\QR (k)$ as follows. Let $\P^N$,
$N={{n+2} \choose {2}}-1$, be the space of quadrics in $\P^n$ and
let $\Phi_k$, $1\leq k\leq n$, denote the variety of all quadrics of
rank at most $k$. Now, consider the subspace $\P (I(X)_2 )$ of
$\P^N$ and $\Phi_k (X) := \Phi_k \cap \P (I(X)_2 )$ as a projective
algebraic set in $\P (I(X)_2 )$. In this framework, $X$ satisfies
property $\QR (k)$ if and only if $\Phi_k (X)$ is \textit{nondegenerate} in
$\P (I(X)_2 )$.

There are lots of examples of property $\QR (4)$ in the literature.
First, let $\mathcal{L}$ be a line bundle of degree $d$ on a smooth
curve $\mathcal{C}$ of genus $g$. In \cite{SD}, B. Saint-Donat
proved that if $d \geq 2g+2$ then $\mathcal{C} \subset \P H^0
(\mathcal{C}, \mathcal{L})$ satisfies property $\QR (4)$. When
$\mathcal{C}$ is non-hyperelliptic, non-trigonal and not isomorphic
to a plane quintic, M. Green\cite{Gre} reproved the classical
Torelli's Theorem by showing that the canonical embedding
$\phi_{K_{\mathcal{C}}} ( \mathcal{C}) \subset \P^{g-1}$ satisfies
property $\QR (4)$. See also \cite{Pe, SD2, ArH}. Many classical
constructions in projective geometry such as rational normal
scrolls, Veronese varieties and Segre varieties of two projective
spaces are \textit{determinantally presented} in the sense that
their homogeneous ideals are generated by $2$-minors of a
$1$-generic matrix of linear forms (see e.g. \cite{EKS, Harris, Pu,
Ha}). Furthermore, any Segre-Veronese variety is defined
ideal-theoretically by $2$-minors of several linear determinantal
presentations, which are also called 'flattenings' (see \cite{B}).
Recently, Sidman and Smith in \cite{SS} proved that every
sufficiently ample line bundle on a projective connected scheme is
determinantally presented. So, they all satisfy property $\QR
(4)$.\\

Our first main theorem is about the property $\QR (3)$ of the
Veronese variety $V_{n,d} := \nu_d (\P^n ) \subset \P^N$, $N={{n+d}
\choose {n}}-1$, which is unexpected(!).

\begin{theorem}\label{thm:Veronese1}
Let $d, n$ be any positive integers and suppose that ${\rm
char}~\Bbbk \neq 2,3$. Then $(\P^n , \mathcal{O}_{\P^n} (d))$
satisfies property $\QR (3)$.
\end{theorem}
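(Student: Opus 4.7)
The plan is to exhibit an explicit family of rank-$\leq 3$ quadrics vanishing on $V_{n,d}$ and to show it spans the full degree-$2$ piece $I(V_{n,d})_2$; this suffices since the Veronese ideal is generated in degree $2$. Under the canonical identification $\mathbb{P}^N=\mathbb{P}\,H^0(\mathbb{P}^n,\mathcal{O}(d))$, each linear form $L$ corresponds to a degree-$d$ form $f_L$, and any triple $(f,g,h)$ of degree-$d$ forms satisfying $fg=h^2$ in $H^0(\mathcal{O}(2d))$ gives a rank-$\leq 3$ quadric $L_f L_g - L_h^2\in I(V_{n,d})$. The elementary family I would use is
\[
Q_{\ell,m,M}:=L_{\ell^2M}\,L_{m^2M}-L_{\ell mM}^2,\qquad \ell,m\in H^0(\mathcal{O}(1)),\ M\in H^0(\mathcal{O}(d-2)),
\]
whose membership in $I(V_{n,d})$ follows immediately from $(\ell^2M)(m^2M)=(\ell mM)^2$.

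Setting $V:=\operatorname{span}\{Q_{\ell,m,M}\}\subseteq I(V_{n,d})_2$, the key step is to polarise. Substituting $\ell\mapsto\ell+s\ell'$ and $m\mapsto m+tm'$ and extracting the $st$-coefficient of the identity $Q_{\ell+s\ell',m+tm',M}\in V$ (using $\operatorname{char}\Bbbk\neq 2$ to divide by $2$) gives
\[
2L_{\ell\ell'M}L_{mm'M}-L_{\ell mM}L_{\ell'm'M}-L_{\ell m'M}L_{\ell'mM}\in V.
\]
Two further polarisations, obtained by substituting $a=\ell+sm,\ b=\ell'+tm'$ or $a=\ell+sm',\ b=m+t\ell'$ in $Q_{a,b,M}$, produce the analogous symmetric identities for the other two pairings of $\{\ell,\ell',m,m'\}$. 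Subtracting two of these three identities yields $3\bigl(L_{\ell\ell'M}L_{mm'M}-L_{\ell mM}L_{\ell'm'M}\bigr)\in V$, and the hypothesis $\operatorname{char}\Bbbk\neq 3$ is used precisely here to divide by $3$ and extract the individual Koszul relation
\[
L_{\ell\ell'M}L_{mm'M}-L_{\ell mM}L_{\ell'm'M}\in V.
\]
In characteristic $3$ the three symmetric identities collapse to a single cyclic-sum relation and no individual Koszul relation can be isolated, consistent with the paper's closing result that $(\mathbb{P}^n,\mathcal{O}(2))$ fails $\QR(3)$ in characteristic $3$ for $n\geq 3$.

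Combined with a similar polarisation in $M$ (yielding $L_{\ell^2M}L_{m^2M'}+L_{\ell^2M'}L_{m^2M}-2L_{\ell mM}L_{\ell mM'}\in V$), these polarised Koszul-type relations are the raw material to span $I(V_{n,d})_2$. Verifying that they genuinely span the kernel of the multiplication map $\operatorname{Sym}^2 H^0(\mathcal{O}(d))\to H^0(\mathcal{O}(2d))$ is the main obstacle. I expect to handle this either by induction on $d$, with base case $d=2$ being that the off-diagonal $2\times 2$ minors of the symmetric catalecticant lie in the span of the diagonal minors $L_{\ell^2}L_{m^2}-L_{\ell m}^2$ (which is exactly the polarised identity above with $M=1$), or by a direct combinatorial argument reducing each elementary monomial swap $L_{x^{A+e_i}}L_{x^{B+e_j}}-L_{x^{A+e_j}}L_{x^{B+e_i}}$ to a linear combination of $Q_{\ell,m,M}$'s for appropriately chosen, possibly non-monomial, parameters $M$, exactly as in the twisted cubic instance $z_0z_3-z_1z_2=Q_{x_0,x_1,x_0+x_1}-Q_{x_0,x_1,x_0}-Q_{x_0,x_1,x_1}$.
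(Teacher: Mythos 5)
Your setup coincides exactly with the paper's: your family $Q_{\ell,m,M}$ is the paper's $Q$-map for the decomposition $\mathcal{O}_{\P^n}(d)=\mathcal{O}_{\P^n}(1)^{\otimes 2}\otimes\mathcal{O}_{\P^n}(d-2)$, and your polarisation identities (the three pairings of $\{\ell,\ell',m,m'\}$, the subtraction and division by $6$, and the symmetrised two-$M$ identity) are precisely the paper's Lemma \ref{lem:gamma}, parts (1)--(3) and the starting point of parts (4)--(6), with the roles of ${\rm char}\,\Bbbk\neq 2,3$ located correctly. The difficulty is that what you defer as ``the main obstacle'' --- verifying that these relations span $I(V_{n,d})_2$ --- is the actual content of the theorem, and it is not a routine check. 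Your single-$M$ Koszul relation $L_{\ell\ell'M}L_{mm'M}-L_{\ell mM}L_{\ell'm'M}\in V$ only produces binomials $z_Iz_J-z_Kz_L$ in which all four exponent vectors share a common degree-$(d-2)$ core $M$; a general elementary swap $z_{A+e_i}z_{B+e_j}-z_{A+e_j}z_{B+e_i}$ admits no such $M$ unless $A-e_{\ell'}\leq B+e_j$ for some choice of $\ell'$, so one cannot reduce arbitrary generators to these relations by naive chaining.

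The genuinely hard cases are those where $\Supp(I)\cap\Supp(J)=\emptyset$ (and likewise for $K,L$): there the only available relations are your two-$M$ identities, which come out \emph{symmetrised} in $M\leftrightarrow M'$ (sums of two products), and one must somehow isolate the individual terms. The paper handles this by a double induction on $(n,d)$ --- reducing via a common present or absent variable (its Lemma \ref{lem:induction}), base cases $n=1$ and $d=2$ --- followed by a case analysis with more than ten subcases, and in the critical subcase where all nonzero exponents equal $1$ it needs an ingredient entirely absent from your proposal: the $\PGL$-action on $I(V_{n,d})_2$ via the coordinate change $x_0\mapsto x_0+x_1$ (Lemma \ref{lem:1100case}), which preserves rank and desymmetrises the two-$M$ relations. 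Your $d=2$ base case does follow cleanly from your polarised relation with $M=1$ (every generator of $I(V_{n,2})$ is of the form $L_{\ell\ell'}L_{mm'}-L_{\ell m}L_{\ell'm'}$), but without the support-based induction, the full case analysis, and the group-action trick, the proposal does not yet constitute a proof.
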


For the proof of this result, see Theorem \ref{thm:finite generating set of rank 3 quadrics}.

To prove Theorem \ref{thm:Veronese1}, the first step will be to find
many quadratic equations of rank $3$ in $I(V_{n,d})$. To this aim,
we use two methods. To explain the first one, let $f:H^0(\P^n
,\mathcal{O}_{\P^n} (d)) \rightarrow H^0(\P^N ,\mathcal{O}_{\P^N
}(1))$ be the natural isomorphism. Then we obtain the following map
\begin{equation*}
Q : H^0 (\P^n ,\mathcal{O}_{\P^n} (1)) \times H^0 (\P^n
,\mathcal{O}_{\P^n} (1)) \times H^0 (\P^n ,\mathcal{O}_{\P^n} (d-2))
\rightarrow I(V_{n,d})_2
\end{equation*}
defined by
\begin{equation*}
Q(s,t,h) = f(s \otimes s \otimes h) f(t \otimes t \otimes h) - f(s
\otimes t \otimes h )^2 .
\end{equation*}
This map is well-defined since $Q(s,t,h)$ is either $0$ or else a
rank $3$ quadratic equation of $V_{n,d}$. The second method is the
use of the natural group action of $\PGL_n (\Bbbk)$ on
$I(V_{n,d})_2$, by which the rank is preserved. That is, any $Q \in
I(V_{n,d})_2$ of rank $3$ and $\sigma \in \PGL_n (\Bbbk)$ give us a
new element $\sigma (Q) \in I(V_{n,d})_2$ of rank 3.

We prove Theorem \ref{thm:Veronese1} by using the
double induction on $n \geq 1$ and $d \geq 2$. So, we first prove
our theorem for the cases where $n=1$ (Corollary \ref{cor:rational
normal curve}) and $d=2$ (Theorem \ref{thm:secondVeronese}),
respectively. Then we complete the proof by combining the induction
hypothesis and the above two methods. In fact, this step is not
simple and we must deal with more than $10$ partial cases.

Let $W$ be the subspace of $I(V_{n,d})$ spanned by the image of the
above $Q$-map. In Theorem \ref{thm:producingR3Qs}, we provide an
explicitly defined finite subset $\Gamma$ of the image of the
$Q$-map which spans $W$. See Notation and Remarks
\ref{nar:producingQofR3} for the definition of $\Gamma$. In the
proof of Theorem \ref{thm:Veronese1}, it is shown that $I(V_{n,d})$
is generated by $\Gamma$. In particular, the property $\QR (3)$ of
$(\P^n , \mathcal{O}_{\P^n} (d))$ is induced by the decomposition
$\mathcal{O}_{\P^n} (d) = \mathcal{O}_{\P^n} (1)^2 \otimes \mathcal{O}_{\P^n} (d-2)$. \\

Our second main result is

\begin{theorem}\label{thm:Veronese2}
Suppose that ${\rm char}~\Bbbk = 3$. Then

\renewcommand{\descriptionlabel}[1]%
             {\hspace{\labelsep}\textrm{#1}}
\begin{description}
\setlength{\labelwidth}{13mm} \setlength{\labelsep}{1.5mm}
\setlength{\itemindent}{0mm}
\item[{\rm (1)}] $(\P^1 , \mathcal{O}_{\P^1} (d))$ satisfies property $\QR (3)$ for all $d \geq 2$.

\item[{\rm (2)}] $(\P^n , \mathcal{O}_{\P^n} (2))$ satisfies property $\QR (3)$ if and only if $n \leq 2$.
\end{description}
\end{theorem}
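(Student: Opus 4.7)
The plan is as follows. For Part (1), I would apply the $Q$-map construction from the introduction, $Q(s,t,h)=f(s^2 h)f(t^2 h)-f(sth)^2$, specialized to $\P^1$. Taking $s=x$, $t=y$ and $h=x^a y^b$ produces the consecutive catalecticant minor $x_b x_{b+2}-x_{b+1}^2$, which is a rank $3$ quadric. Polarization in the $h$-slot yields the cross-term
\[
Q(x,y,h_1+h_2)-Q(x,y,h_1)-Q(x,y,h_2)=f(x^2 h_1)f(y^2 h_2)+f(x^2 h_2)f(y^2 h_1)-2f(xyh_1)f(xyh_2),
\]
which lies in the span of rank-$3$ quadrics. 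Evaluating this cross-term for $h_1=x^a y^b$ and $h_2=x^{a'}y^{b'}$ with $a+b=a'+b'=d-2$, a direct computation shows it equals, modulo the already-available consecutive minors, the non-consecutive $2$-minor of the catalecticant matrix of distance $b'-b+2$. By induction on $b'-b$, every non-consecutive $2$-minor is a $\kk$-linear combination of rank-$3$ quadrics, and since the $2$-minors span $I(V_{1,d})_2$, property $\QR(3)$ holds. The argument requires no division by $3$, so it remains valid in characteristic $3$.

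For Part (2) the ``if'' direction is immediate. The case $n=1$ is Part (1) with $d=2$. For $n=2$, the ideal of the Veronese surface in $\P^5$ is spanned by three rank-$3$ type~A minors $L_{ii}L_{jj}-L_{ij}^2$ and three type~B minors $L_{ii}L_{jk}-L_{ij}L_{ik}$; each type~B minor equals $\tfrac 12[Q(\epsilon+\eta,\zeta)-Q(\epsilon,\zeta)-Q(\eta,\zeta)]$ with $Q(\epsilon,\zeta)=L_{\epsilon^2}L_{\zeta^2}-L_{\epsilon\zeta}^2$, and the division by $2$ is valid since ${\rm char}\,\kk\neq 2$. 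Thus rank-$3$ quadrics span $I(V_{2,2})_2$.

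For the ``only if'' direction I first reduce to $n=3$. The linear projection $\pi$ obtained by setting every variable $L_{in}$ to zero sends $I(V_{n,2})$ surjectively onto $I(V_{n-1,2})$ (every $2$-minor of the smaller symmetric matrix is already a $2$-minor of the larger) and maps each quadric to one of rank at most $3$ (restriction to a subspace cannot increase rank). Since $V_{n-1,2}$ is linearly normal and irreducible, $I(V_{n-1,2})_2$ has no quadric of rank $\leq 2$, so $\pi$ sends each rank-$3$ quadric in $I(V_{n,2})_2$ either to zero or to a rank-$3$ quadric. Therefore, if property $\QR(3)$ held for $V_{n,2}$ it would hold for $V_{n-1,2}$; contrapositively, if $\QR(3)$ fails for $V_{3,2}$ it fails for all $V_{n,2}$ with $n\geq 3$.

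For $V_{3,2}$, every rank-$3$ quadric in $I(V_{3,2})_2$ is a scalar multiple of some $Q(\epsilon,\zeta)=L_{\epsilon^2}L_{\zeta^2}-L_{\epsilon\zeta}^2$ for $\epsilon,\zeta\in H^0(\P^3,\mathcal{O}(1))$: writing such a quadric as $L_1 L_2-L_3^2$, the ideal condition translates under the identification $R_1\cong H^0(\P^3,\mathcal{O}(2))$ to an equation $q_1 q_2=q_3^2$ in the UFD $\kk[x_0,\dots,x_3]$, which together with linear normality forces the $L_i$ to be proportional to $L_{\epsilon^2},L_{\zeta^2},L_{\epsilon\zeta}$ respectively. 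Now set $p_1=L_{01}L_{23}$, $p_2=L_{02}L_{13}$, $p_3=L_{03}L_{12}$; these span the ``type~C'' part of $I(V_{3,2})_2$, of dimension $2$ modulo the relation among the three $2$-minors $p_i-p_j$. Writing $\epsilon=\sum a_i x_i$, $\zeta=\sum b_j x_j$ and expanding, the coefficient of each $p_k$ in $Q(\epsilon,\zeta)$ takes the form $4\cdot(\text{two terms})-2\cdot(\text{four terms})$; in characteristic $3$, where $4\equiv -2\equiv 1$, all three coefficients collapse to one and the same symmetric sum of the six monomials $a_i a_j b_k b_l$ with $\{i,j,k,l\}=\{0,1,2,3\}$. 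Hence the type~C component of every $Q(\epsilon,\zeta)$, and therefore of every element of the span $W$ of rank-$3$ quadrics, is a scalar multiple of $S_C:=p_1+p_2+p_3$; since $S_C$ spans only a $1$-dimensional subspace of the $2$-dimensional type~C quotient, the $2$-minor $L_{01}L_{23}-L_{02}L_{13}$ does not lie in $W$, so $\QR(3)$ fails. The most delicate and essential step is this coefficient collapse in characteristic $3$, which is precisely the mod-$3$ phenomenon that prevents the characteristic $\neq 2,3$ polarization argument from separating the individual type~C minors.
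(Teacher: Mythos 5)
Your proposal is correct and follows essentially the same route as the paper: explicit $Q$-map computations for $\P^1$ and for $\nu_2(\P^n)$ with $n\le 2$, the UFD argument showing every rank-$3$ quadric in $I(V_{3,2})_2$ arises (up to scalar) from the $Q$-map, a characteristic-$3$ coefficient collapse ($4\equiv -2\equiv 1$) forcing the span of all rank-$3$ quadrics to be a proper subspace of the $20$-dimensional space $I(V_{3,2})_2$, and a linear-section (coordinate-projection) reduction from $n\ge 4$ to $n=3$. The only, harmless, variation is in how the deficiency is detected for $n=3$: the paper bounds the span of the finite set $\Gamma$ by dimension $19$, whereas you project onto the multidegree-$(1,1,1,1)$ component spanned by $p_1=z_{1100}z_{0011}$, $p_2=z_{1010}z_{0101}$, $p_3=z_{1001}z_{0110}$ and check that every $Q(\epsilon,\zeta)$ lands on the line $\kk(p_1+p_2+p_3)$ inside that $2$-dimensional piece of the ideal --- the same mod-$3$ phenomenon, packaged without invoking the spanning-set theorem.
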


For the proof of this result, see Theorem \ref{thm:secondVeronese}.

For the cases $n=1$ and $n=d=2$, our proof of Theorem
\ref{thm:Veronese1} is indeed characteristic free. To prove the
failure of property $\QR (3)$ of $(\P^n , \mathcal{O}_{\P^n} (2))$
for $n \geq 3$, a crucial point is that any quadratic equation of
$\nu_2 (\P^n )$ of rank $3$ is obtained from the above $Q$-map.
Then, for $n=3$ we find a subset $\Gamma$ of $I (\nu_2 (\P^n ))$
with $|\Gamma|=19$ which generates the subspace spanned by the image
of the $Q$-map. Since $I (\nu_2 (\P^n ))_2$ is of $20$-dimension,
this shows that $(\P^3 , \mathcal{O}_{\P^3} (2))$ fails to satisfy
property $\QR (3)$. For $n \geq 4$, the proof comes from the fact
that $\nu_2 (\P^n )$ contains $\nu_2 (\P^3 )$ as an \textit{ideal-theoretic}
linear section (see Remark \ref{rmk:ideal version of Mumford's observation}).

When ${\rm char}~\Bbbk = 3$, $n \geq 2$ and $d \geq 3$, we do not
know yet whether $(\P^n , \mathcal{O}_{\P^n} (d))$ satisfies
property $\QR (3)$ or not. \\

Our third main result is about the asymptotic nature of the rank of
quadratic equations of the Veronese re-embedding of $(X,L)$ when $X$
is an arbitrary projective scheme and $L$ is a very ample line
bundle on $X$. In this direction, the first result is due to D.
Mumford, who proved in \cite[Theorem 1]{M} that if $X \subset \P^n$
is a nondegenerate irreducible projective variety, then
\begin{enumerate}
\item[$(i)$] the $d$th Veronese re-embedding of $X$ is
set-theoretically defined by quadrics, and
\item[$(ii)$] those quadrics can
be chosen as quadrics of rank at most $4$
\end{enumerate}
for all $d \geq \deg (X)$. Since \cite{M} had appeared, there have
been several interesting generalizations. In \cite{Gre1} and
\cite{Gre2}, Green considered the quadratic generation of the
homogeneous ideal as the first step towards understanding higher
syzygies. So far, numerous results have been reported in this
direction. Due to Green-Lazarsfeld in \cite{GL}, we say that $L$
satisfied condition $\mathrm{N}_p$ for some $p \geq 1$ if $X \subset
\P H^0 (X,L)$ is projectively normal and ideal-theoretically cut out
by quadrics such that the first $(p-1)$ steps of the minimal free
resolution of $I(X)$ are linear. In \cite{EL}, Ein and Lazarsfeld
proved that if $X$ is a complex smooth variety and $L$ is a very
ample line bundle on $X$ of degree $d_0$ then $L^d$ satisfies
condition $\mathrm{N}_{d+1-d_0}$ (see also \cite{GP}). Thus this
result generalizes the statement $(i)$ above. Also the statement
$(ii)$ is widely extended by many results on the determinantal
presentation of projective varieties, as mentioned above.

Along this line, our third main result is

\begin{theorem}\label{thm:main1}
Suppose that ${\rm char}~\Bbbk \neq 2,3$ and let $L$ be a very ample
line bundle on a projective scheme $X$ defining the linearly normal
embedding
\begin{equation*}
X \subset \P H^0 (X,L).
\end{equation*}
If $m$ is an integer such that $X$ is $j$-normal for all $j \geq m$ and $I(X)$ is generated by forms of
degree $\leq m$, then $(X,L^d )$ satisfies property $\QR (3)$ for all $d
\geq m$.
\end{theorem}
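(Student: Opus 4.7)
\textbf{The plan} is to reduce Theorem \ref{thm:main1} to Theorem \ref{thm:Veronese1} by factoring the $d$-th Veronese re-embedding of $X$ through the Veronese embedding of the ambient $\P^n = \P H^0(X,L)$. Set $V := H^0(\P^n, \mathcal{O}_{\P^n}(d))$ and $V' := H^0(X, L^d)$. Since $d \geq m$ and $X$ is $d$-normal, the restriction $V \twoheadrightarrow V'$ is surjective with kernel $K := I(X \subset \P^n)_d$. For each $k \geq 1$, I would work with the commutative square
\[
\begin{CD}
\mathrm{Sym}^k V @>>> H^0(\P^n, \mathcal{O}_{\P^n}(kd)) \\
@VVV @VVV \\
\mathrm{Sym}^k V' @>>> H^0(X, L^{kd})
\end{CD}
\]
whose four kernels are $I(V_{n,d})_k$, $I(X \subset \P H^0(X,L^d))_k$, $K \cdot \mathrm{Sym}^{k-1}V$, and $I(X \subset \P^n)_{kd}$; all four maps are surjective, using $kd \geq m$ (for every $k \geq 1$) for the right vertical.

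\textbf{The key step} is to establish the identity
\[
I(X \subset \P^n)_{kd} \;=\; K \cdot H^0(\P^n, \mathcal{O}_{\P^n}((k-1)d)) \qquad (k \geq 1).
\]
Because $I(X \subset \P^n)$ is generated in degrees $\leq m \leq d$, any element of $I(X)_{kd}$ is a sum of products $f \cdot g$ with $f \in I(X)_j$, $j \leq m$; the factorization $H^0(\P^n, \mathcal{O}(kd-j)) = H^0(\P^n, \mathcal{O}(d-j)) \cdot H^0(\P^n, \mathcal{O}((k-1)d))$ rewrites each term as $(fh_1) \cdot h_2$ with $fh_1 \in I(X)_d = K$. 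Armed with this, a diagram chase in the square yields
\[
I(V_{n,d})_k \;\twoheadrightarrow\; I(X \subset \P H^0(X,L^d))_k
\]
for every $k$: given $\bar q$ in the target, I lift to $q \in \mathrm{Sym}^k V$; the image of $q$ in $H^0(\P^n, \mathcal{O}(kd))$ lies in $K \cdot H^0(\P^n, \mathcal{O}((k-1)d))$ by the identity, so subtracting an appropriate $r \in K \cdot \mathrm{Sym}^{k-1}V$ (lifted term by term through the surjection $\mathrm{Sym}^{k-1} V \twoheadrightarrow H^0(\P^n, \mathcal{O}((k-1)d))$) produces $q - r \in I(V_{n,d})_k$ still projecting to $\bar q$.

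\textbf{Concluding}, by Theorem \ref{thm:Veronese1} (using $\mathrm{char}~\Bbbk \neq 2,3$) the ideal $I(V_{n,d})$ is generated by quadrics of rank $\leq 3$ and in particular is generated in degree $2$. Hence its image under the graded surjection $\pi : \mathrm{Sym}^\bullet V \twoheadrightarrow \mathrm{Sym}^\bullet V'$ is generated as an ideal by $\pi(I(V_{n,d})_2) = I(X \subset \P H^0(X,L^d))_2$. Rank is nonincreasing under $\pi$ in characteristic $\neq 2$: a rank-$\leq 3$ quadric diagonalizes as $c_1 \ell_1^2 + c_2 \ell_2^2 + c_3 \ell_3^2$ with $\ell_i \in V$, and projects to $c_1 \bar\ell_1^2 + c_2 \bar\ell_2^2 + c_3 \bar\ell_3^2 \in \mathrm{Sym}^2 V'$, again of rank $\leq 3$. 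Combining, $I(X \subset \P H^0(X, L^d))$ is generated by quadrics of rank $\leq 3$, which is exactly $\QR(3)$ for $(X, L^d)$.

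\textbf{The main obstacle} is the key identity $I(X)_{kd} = I(X)_d \cdot H^0(\P^n, \mathcal{O}((k-1)d))$, which is precisely where the two hypotheses — degree-$\leq m$ generation of $I(X)$ and $j$-normality of $X$ for $j \geq m$ — combine with the threshold $d \geq m$. Once that lemma is in hand, the diagram chase and the rank-nonincrease under a linear projection are both routine.
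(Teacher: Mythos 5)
Your proposal is correct and takes essentially the same route as the paper: both reduce to Theorem \ref{thm:Veronese1} by showing that $X_d \subset \P H^0(X,L^d)$ is an ideal-theoretic linear section of $V_{n,d}$ (your surjection $I(V_{n,d})_k \twoheadrightarrow I(X_d)_k$ is exactly this statement), with your key identity $I(X)_{kd}=I(X)_d\cdot H^0(\P^n,\mathcal{O}((k-1)d))$ playing the role of the paper's claim that the module $E=\bigoplus_j H^0(\P^n,\mathcal{I}_X(dj))$ is generated by $E_1$. The only difference is presentational: the paper runs the argument through the exact sequence of ideal sheaves and projective normality of $V_{n,d}$, whereas you do the equivalent diagram chase on symmetric powers.
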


For the proof of this result, see the beginning of $\S~ \ref{sect5}$.

In Theorem \ref{thm:main1}, we can take $m$ to be the regularity of
$X \subset \P H^0 (X,L)$ in the sense of Castelnuovo-Mumford.

As an immediate application of Theorem \ref{thm:main1}, suppose that
$(X,L)$ satisfies Green-Lazarsfeld's condition $\mathrm{N}_1$. Then one could take $m=2$ and hence $(X,L^d )$ satisfies property $\QR (3)$ for all $d
\geq 2$ (see Corollary \ref{cor:Application 1}). This can be applied
to the Grassmannian manifolds (see Example \ref{ex:Grassmannian}).
In $\S$ \ref{sect5}, we provide a few examples which illustrate how to apply
Theorem \ref{thm:main1} to specific varieties.

\begin{remark}\label{rem:positivity and QR(3)}
Let $X$ be a projective scheme. Our main results in this paper show
that there is a significant correlation between some positive nature
of the very ample line bundle $L$ on $X$ and the property $\QR(3)$
of $(X,L)$, just like in many works on Green-Lazarsfeld's condition
$\mathrm{N}_p$ and Eisenbud-Koh-Stillman's determinantal
presentation of very ample line bundles. We will discuss more on
this direction in $\S~\ref{sect_eg_prbm}$.
\end{remark}

The paper is structured as follows. In $\S$ \ref{sect2}, we develop a method
to generate rank $3$ quadratic equations of $X \subset \P H^0
(X,L)$. Also we prove that every rational normal curve satisfies
property $\QR(3)$. In $\S$ \ref{sect3}, we study about property $\QR(3)$ of
the second Veronese embedding $(\P^n , \mathcal{O}_{\P^n} (2))$. In
$\S$ \ref{sect4}, we prove that every Veronese embedding $(\P^n ,
\mathcal{O}_{\P^n} (d))$ satisfies property $\QR(3)$ unless ${\rm
char}~\Bbbk = 2,3$. In $\S$ \ref{sect5}, we study the asymptotic behavior of
the property $\QR(3)$ of $(X,L^d )$ when $X$ is an arbitrary
projective scheme and $L$ is a very ample line bundle on $X$. We
also provide some applications of the results to the case of complete
intersections, Grassmannian manifolds, Abelian varieties, Enriques
surfaces and K3 surfaces. We finish the paper by giving relevant
examples, some problems for further direction in $\S$ \ref{sect_eg_prbm}.\\

\noindent {\bf Acknowledgement.} We would like to thank Daeyeol Jeon
for Example \ref{ex:canonical curve}. Kangjin Han was supported by
the National Research Foundation of Korea (NRF) grant funded by the
Korea government (Ministry of Science and ICT, no.
2017R1E1A1A03070765) and the DGIST Start-up Fund (Ministry of
Science, ICT and Future Planning, no. 2016010066). Wanseok Lee was
supported by Basic Science Research Program through NRF funded by
the Ministry of Education (no. 2020R1F1A1A01049999). Hyunsuk Moon was supported by the National Research Foundation of Korea(NRF) grant funded by the Korea government(MSIT) (no.2017R1E1A1A03070765). Euisung Park
was supported by the Korea Research Foundation Grant funded by the
Korean Government (no. 2018R1D1A1B07041336). The computer algebra
software \texttt{Macaulay2} \cite{GS} was very useful for many
related experiments and expectations. We are grateful to the anonymous referee for careful reading and valuable suggestions

\section{Rank $3$ quadratic generators}\label{sect2}
\noindent This section is devoted to introduce our method to
generate rank $3$ quadrics in the ideal of a projective scheme.

\begin{notation and remarks}\label{nar:producingQofR3}
Let $X$ be a projective scheme and $L$ a very ample line bundle on
$X$. Suppose that $L$ is decomposed as $L=L_1^{\otimes 2}\otimes
L_2$ where $L_1$ and $L_2$ are line bundles on $X$ such that
\begin{equation*}
p := h^0 (X,L_1 ) \geq 2 \quad \mbox{and} \quad q := h^0 (X,L_2 )
\geq 1.
\end{equation*}

\renewcommand{\descriptionlabel}[1]%
             {\hspace{\labelsep}\textrm{#1}}
\begin{description}
\setlength{\labelwidth}{13mm} \setlength{\labelsep}{1.5mm}
\setlength{\itemindent}{0mm}

\item[{\rm (1)}] The linearly normal embedding $X\subset \P
H^0(X,L)=\P^r$ induces an isomorphism
\begin{equation*}
f:H^0(X,L) \rightarrow  H^0(\P^r,\mathcal{O}_{\P^r}(1)).
\end{equation*}

\item[{\rm (2)}] Define the map
\begin{equation*}
Q = Q_{L_1 , L_2} : H^0 (X,L_1 ) \times H^0 (X,L_1 ) \times H^0
(X,L_2 ) \rightarrow I(X)_2
\end{equation*}
by
\begin{equation*}
Q(s,t,h) = f(s \otimes s \otimes h) f(t \otimes t \otimes h) - f(s
\otimes t \otimes h )^2 .
\end{equation*}
This map is well-defined since the restriction of $Q(s,t,h)$ to $X$
becomes
\begin{equation*}
(s\otimes s\otimes h)|_X \times(t\otimes t\otimes h)|_X - (s\otimes
t\otimes h)|_X ^2 =0
\end{equation*}
(cf. \cite[Proposition 6.10]{E}). Thus, $Q(s,t,h)$ is either
$0$ or else a rank $3$ quadratic equation of $X$ (see also Lemma \ref{exchange}).

\item[{\rm (3)}] (\textit{A sufficient way to guarantee $\QR(3)$}) Let $\mathfrak{Q} = \mathfrak{Q}_{L_1 ,L_2}$ be the ideal generated by the image of the map $Q_{L_1 , L_2 }$. Thus $\mathfrak{Q} \subseteq I(X)$ and $(X,L)$ satisfies property $\QR(3)$ if the equality $\mathfrak{Q}= I(X)$ is attained.

\item[{\rm (4)}] (\textit{Featured subsets of $W_{L_1 ,L_2}$}) Let $\{s_1,s_2,\ldots, s_p\}$ and  $\{h_1,h_2,\ldots,h_q\}$ be any chosen bases for $H^0
(X,L_1 )$ and $H^0 (X,L_2 )$ respectively. Also, let
$$\begin{cases}
\Delta_1 := \{ (i,j)~|~ i,j \in \{1,\ldots ,p \},~i<j \},\\
\Delta_2 := \{ (i,j,k) ~|~ (i,j) \in \Delta_1 ,~k \in \{1,\ldots ,p \},~k \neq i,j \},\\
\Delta_3 := \{ ( i,j,k, l)) ~|~ (i,j),(k, l) \in \Delta_1 ,~i<k ,~ j \neq k,l  \}, \quad \mbox{and} \\
H := \{ h_1 , \ldots , h_q \} \cup \{ h_i + h_j ~|~ 1 \le i < j \le q \}.
\end{cases}$$
Concerned with the problem of finding a spanning set of $W=W_{L_1 ,L_2}$, the subspace of $I(X)_2$
spanned by the image of the map $Q_{L_1 , L_2 }$, we define three types of finite subsets of $W$ as follows:
$$\begin{cases}
\Gamma_{11}=\big\{Q(s_i,s_j,h) ~|~ (i,j) \in \Delta_1 , ~h \in H \big\}\\
\Gamma_{12}=\big\{Q(s_i + s_j , s_k,h)~|~ (i,j,k) \in \Delta_2 , ~h \in H  \big\}\\
\Gamma_{22}=\big\{Q(s_i+s_j,s_k +s_l,h )~|~ (i,j,k,l)  \in \Delta_3 , ~h \in H \big\}
\end{cases}$$
Let $\Gamma (L_1 , L_2)$ be the union $\Gamma_{11}\cup \Gamma_{12}
\cup \Gamma_{22}$. Then it can be checked that
\begin{equation}\label{eq:upper bound}
|\Gamma (L_1 , L_2)| \leq {{{{p} \choose {2}}+1} \choose 2} \times
{q+1 \choose 2}.
\end{equation}
\end{description}
\end{notation and remarks}

Our main result in this section is

\begin{theorem}\label{thm:producingR3Qs}
Keep the notations in Notation and Remarks \ref{nar:producingQofR3}.
Then,
\begin{equation*}
\textrm{the subspace $W_{L_1 , L_2 }$ is spanned by $\Gamma(L_1 ,
L_2)$ as a $\Bbbk$-vector space.}
\end{equation*}
\end{theorem}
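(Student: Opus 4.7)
My plan is to establish the spanning property by expanding an arbitrary $Q(s,t,h)$ in the chosen bases and reducing each resulting piece to $\Gamma(L_1,L_2)$ via polarization together with a small number of determinantal identities. The key observation is that $Q(s,t,h)=\phi(s,s)\phi(t,t)-\phi(s,t)^{2}$, where $\phi(x,y):=f(x\otimes y\otimes h)$ is symmetric and bilinear in its first two arguments; this makes $Q$ homogeneous of degree two in each of $s$, $t$, $h$ separately, symmetric in $(s,t)$, and vanishing when $s=t$.

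First I would polarize in $h$: writing $h=\sum_{k}c_{k}h_{k}$ and using the quadratic dependence on $h$ yields
\[
Q(s,t,h)=\sum_{k}c_{k}^{2}\,Q(s,t,h_{k})+\sum_{k<l}c_{k}c_{l}\bigl[Q(s,t,h_{k}+h_{l})-Q(s,t,h_{k})-Q(s,t,h_{l})\bigr],
\]
so that $W_{L_{1},L_{2}}$ is spanned by the $Q(s,t,h)$ with $h\in H$. Fixing such an $h$, I next polarize $Q(\cdot,t,h)$ in $s=\sum a_{i}s_{i}$ and then each resulting $Q(u,\cdot,h)$ in $t=\sum b_{k}s_{k}$. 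Since $\operatorname{char}\Bbbk\neq 2$, this rewrites $Q(s,t,h)$ as a $\Bbbk$-linear combination of terms $Q(u,v,h)$ with $u,v\in\{s_{i}\}\cup\{s_{i}+s_{j}:i<j\}$. To push each such term into $\Gamma_{11}\cup\Gamma_{12}\cup\Gamma_{22}$ I combine the symmetry $Q(s,t,h)=Q(t,s,h)$ with two algebraic identities, both derived directly from the $2\times 2$ Gram-type determinant above:
\begin{align*}
&\textup{(Abs)}\quad Q(s,s+u,h)=Q(s,u,h),\\
&\textup{(Share)}\quad Q(s+u,s+v,h)=Q(s,u,h)+Q(s,v,h)-Q(u,v,h)-Q(s,u+v,h)+Q(s+u,v,h)+Q(s+v,u,h).
\end{align*}

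The identity (Abs) collapses every occurrence of $Q(s_{i},s_{i}+s_{l},h)$ or $Q(s_{i}+s_{j},s_{i},h)$ into $\Gamma_{11}$, while $Q(s_{i},s_{j}+s_{k},h)$ with $i\notin\{j,k\}$ already lies in $\Gamma_{12}$ after the symmetry exchange. The remaining task is to handle $Q(s_{i}+s_{j},s_{k}+s_{l},h)$: if $\{i,j\}\cap\{k,l\}=\emptyset$, then $(s,t)$-symmetry lets me arrange $i<k$, so that $(i,j,k,l)\in\Delta_{3}$ and the term sits in $\Gamma_{22}$; if the two pairs overlap in exactly one index, I apply (Share) with the shared basis vector in the role of~$s$, which reduces the term to a combination of elements of $\Gamma_{11}\cup\Gamma_{12}$. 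The main obstacle I anticipate is isolating the correct identity (Share): it is the only nontrivial piece of algebra beyond standard polarization, and it is proved by a direct expansion of $\phi(s+u,s+u)\phi(s+v,s+v)-\phi(s+u,s+v)^{2}$ using the bilinearity of~$\phi$. Finding a single identity that uniformly resolves all three overlap configurations ($i=k$, $j=k$, and $j=l$) is what keeps the final bookkeeping clean.
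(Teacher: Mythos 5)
Your proposal is correct and follows essentially the same route as the paper: your polarization steps in $h$, $s$, $t$ are the paper's Lemma \ref{exchange}(2)--(3), your identity (Abs) is the relation $Q(s,s+t,h)=Q(s,t,h)$ from Lemma \ref{exchange}(1), and your identity (Share) is exactly Lemma \ref{exchange}(4) after relabeling the shared variable and using the symmetry $Q(s,t,h)=Q(t,s,h)$. The only cosmetic difference is that you derive (Share) by direct expansion of the Gram determinant, whereas the paper deduces it from parts (1) and (3) of the same lemma.
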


To prove Theorem \ref{thm:producingR3Qs}, we begin with the
following

\begin{lemma}\label{exchange}
Keep the notations in Notation and Remarks \ref{nar:producingQofR3}.
Then

\renewcommand{\descriptionlabel}[1]%
             {\hspace{\labelsep}\textrm{#1}}
\begin{description}
\setlength{\labelwidth}{13mm} \setlength{\labelsep}{1.5mm}
\setlength{\itemindent}{0mm}

\item[{\rm (1)}] For any $s,t\in H^0(X,L_1)$, $h\in H^0(X,L_2)$ and a constant $\lambda\in\kk$, it holds that
\begin{equation*}
Q(s,s,h)=0, Q(s,t,h)=Q(t,s,h),Q(s,s+t,h)=Q(s,t,h)
\end{equation*}
\begin{equation*}
\text{~and~} Q(\lambda s,t,h)=Q(s,t,\lambda h)=\lambda^2 Q(s,t,h).
\end{equation*}

\item[{\rm (2)}] For $s,t,u\in H^0(X,L_1)$, $g,h \in H^0(X,L_2)$ and $a,b \in \Bbbk$, it holds that
\begin{equation*}
Q(s,at+bu,h) = (a^2 -ab) Q(s,t,h) + (b^2 -ab) Q(s,u,h) + ab Q(s,t+u,h)
\end{equation*}
and
\begin{equation*}
Q(s,t,ag+bh) = (a^2 -ab) Q(s,t,g) + (b^2 -ab) Q(s,t,h) + ab Q(s,t,g+h).
\end{equation*}

\item[{\rm (3)}] For $m\geq 3$, let $s,t,t_1 , \ldots ,
t_m \in H^0(X,L_1)$ and $h,g_1 , \ldots , g_m \in H^0(X,L_2)$. Then
\begin{equation*}
Q(s,t_1+t_2+\cdots+t_m,h)=\sum_{1\leq i<j\leq m} Q(s,t_i+t_j,h)
-(m-2)\sum_{i=1}^m Q(s,t_i,h)
\end{equation*}
and
\begin{equation*}
Q(s,t,g_1+g_2+\cdots+g_m)=\sum_{1\leq i<j\leq m} Q(s,t,g_i+g_j)
-(m-2)\sum_{i=1}^m Q(s,t,g_i).
\end{equation*}

\item[{\rm (4)}] For $s,t,u\in H^0(X,L_1)$ and $h \in H^0(X,L_2)$, it holds that
\begin{equation*}
Q(s+u,t+u,h) = Q(s,u,h)+Q(t,u,h)+Q(s+u,t,h)+Q(t+u,s,h)-Q(s,t,h) -Q(s+t,u,h).
\end{equation*}
\end{description}
\end{lemma}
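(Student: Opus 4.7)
The plan is to view $Q$ as a polarization-type construction. For fixed $s\in H^0(X,L_1)$ and $h\in H^0(X,L_2)$, introduce the symmetric bilinear form
\[
B_{s,h}(t,u) := f(s\otimes s\otimes h)\,f(t\otimes u\otimes h) - f(s\otimes t\otimes h)\,f(s\otimes u\otimes h)
\]
on $H^0(X,L_1)$. Symmetry in $(t,u)$ comes from commutativity of multiplication of sections (so $f(t\otimes u\otimes h) = f(u\otimes t\otimes h)$), and bilinearity comes from that of $f$. By construction $Q(s,t,h) = B_{s,h}(t,t)$, and the polarization identity reads
\[
Q(s,t+u,h) = Q(s,t,h) + 2\,B_{s,h}(t,u) + Q(s,u,h).
\]
A symmetric construction in the third slot, fixing $s,t$ and varying $g,h\in H^0(X,L_2)$, yields another symmetric bilinear form $\widetilde B_{s,t}$ on $H^0(X,L_2)$ with $Q(s,t,g) = \widetilde B_{s,t}(g,g)$. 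Throughout I use ${\rm char}~\Bbbk \neq 2$, which is standing in the paper.

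With these in hand, Part (1) is immediate: $Q(s,s,h) = B_{s,h}(s,s) = 0$ cancels termwise; $Q(s,t,h) = Q(t,s,h)$ is the same commutativity that gave symmetry of $B_{s,h}$; the identity $Q(s,s+t,h) = Q(s,t,h)$ follows from polarization together with $B_{s,h}(s,t) = 0$; and the scalar rule is obtained by pulling $\lambda$ through the appropriate factor of $f$. Part (2) is then just the polarization identity
\[
B_{s,h}(at+bu,\,at+bu) = a^2 B_{s,h}(t,t) + 2ab\,B_{s,h}(t,u) + b^2 B_{s,h}(u,u)
\]
after eliminating $2B_{s,h}(t,u)$ via the displayed polarization formula; the analogous identity for the third argument is the same argument applied to $\widetilde B_{s,t}$.

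For Part (3), I would expand
\[
Q(s,\,t_1+\cdots+t_m,\,h) = \sum_{i} B_{s,h}(t_i,t_i) + 2\sum_{i<j} B_{s,h}(t_i,t_j)
\]
and then replace each $2B_{s,h}(t_i,t_j)$ by $Q(s,t_i+t_j,h) - Q(s,t_i,h) - Q(s,t_j,h)$ coming from Part (2). Each $Q(s,t_k,h)$ is subtracted once for every pair containing $k$, that is $m-1$ times, producing the net coefficient $1-(m-1) = -(m-2)$ and yielding the stated formula; the analogous statement in the third slot follows identically using $\widetilde B_{s,t}$.

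Part (4) is the only step requiring a genuine computation and is the main obstacle (in bookkeeping, not ideas). My plan is to expand every $Q$-term on both sides in terms of the six building blocks
\[
f(s\otimes s\otimes h),\ f(t\otimes t\otimes h),\ f(u\otimes u\otimes h),\ f(s\otimes t\otimes h),\ f(s\otimes u\otimes h),\ f(t\otimes u\otimes h),
\]
collect coefficients of the resulting quadratic monomials in these six elements, and verify that the two sides match. Alternatively one can apply (2) twice, once in each of the two arguments of $Q(s+u,t+u,h)$, but the direct expansion is a one-shot finite symbolic check and seems the cleanest route.
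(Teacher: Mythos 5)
Your proposal is correct, and for parts (3) and (4) it takes a genuinely different route from the paper. Your symmetric bilinear form $B_{s,h}(t,u)$ is literally the paper's auxiliary expression $P(s,t,u,h)=f(s\otimes s\otimes h)f(t\otimes u\otimes h)-f(s\otimes t\otimes h)f(s\otimes u\otimes h)$ (and $\widetilde B_{s,t}$ is its $R(s,t,g,h)$), so your treatment of (1) and (2) coincides with the paper's, just phrased in polarization language; note that eliminating $2ab\,B_{s,h}(t,u)$ only requires multiplying the polarization identity by $ab$, so no division by $2$ is ever needed. For (3) the paper inducts on $m$, reducing $t_1+\cdots+t_m$ to $t_1+\cdots+(t_{m-1}+t_m)$ and recombining; your direct expansion $B_{s,h}\bigl(\sum t_i,\sum t_j\bigr)=\sum_i Q(s,t_i,h)+\sum_{i<j}2B_{s,h}(t_i,t_j)$ followed by the substitution $2B_{s,h}(t_i,t_j)=Q(s,t_i+t_j,h)-Q(s,t_i,h)-Q(s,t_j,h)$ and the count $1-(m-1)=-(m-2)$ is a cleaner one-step argument that buys you the formula without induction. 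For (4) the paper is slicker: it writes $Q(s+t,u,h)=Q(s+t,s+t+u,h)$ using (1), expands by (3) with $m=3$ to get $Q(s+t,s+u,h)+Q(s+t,t+u,h)=2Q(s,t,h)+2Q(s+t,u,h)$, and then symmetrizes over permutations of $s,t,u$; your brute-force expansion in the six products $f(s\otimes s\otimes h),\dots,f(t\otimes u\otimes h)$ does verify the identity (both sides reduce to $AB+AC+BC-D^2-E^2-F^2+2AF+2BE-2CD-2DE-2DF+2EF$ in the obvious notation), at the cost of a longer computation and less insight into why the terms combine. One small caution: your parenthetical alternative of "applying (2) twice" to $Q(s+u,t+u,h)$ is vacuous, since (2) with $a=b=1$ reads $Q(s,t+u,h)=Q(s,t+u,h)$; it is good that you designate the direct expansion as the actual route.
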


\begin{proof}
$(1)$ Note that $f(s\otimes t\otimes h)=f(t\otimes s\otimes h)$. Then the statement can be easily shown by using the definition of the map
$Q$.

\noindent $(2)$ One can check that
\begin{equation*}
Q(s,at+bu,h)= a^2 Q(s,t,h) +b^2 Q(s,u,h) +2ab P(s,t,u,h)
\end{equation*}
where $P(s,t,u,h) := f(s \otimes s \otimes h) f(t \otimes u \otimes h)-f(s
\otimes t \otimes h)f(s \otimes u \otimes h)$. In particular, it holds that
\begin{equation*}
Q(s,t+u,h)= Q(s,t,h) + Q(s,u,h) +2P(s,t,u,h).
\end{equation*}
Thus we have
\begin{equation*}
Q(s,at+bu,h)= a^2 Q(s,t,h) +b^2 Q(s,u,h) +ab \{ Q(s,t+u,h)- Q(s,t,h) - Q(s,u,h)\},
\end{equation*}
which shows the first formula. Similarly, one can prove that
\begin{equation*}
Q(s,t,ag+bh) = a^2 Q(s,t,g) + b^2 Q (s,t,h)+ab R(s,t,g,h)
\end{equation*}
where $R(s,t,g,h) := f(s \otimes s \otimes g)f(t \otimes t \otimes
h)+ f(s \otimes s \otimes h)f(t \otimes t \otimes g)-2f(s \otimes t
\otimes g)f(s \otimes t \otimes h)$. Then it holds that
\begin{equation*}
Q(s,t, g+ h) =   Q(s,t,g) +   Q (s,t,h)+ R(s,t,g,h)
\end{equation*}
and hence we get
\begin{equation*}
Q(s,t,ag+bh)= a^2 Q(s,t,g) +b^2 Q(s,t,h) +ab \{ Q(s,t, g+ h)-  Q(s,t,g)- Q (s,t,h) \},
\end{equation*}
which shows the second formula.

\noindent $(3)$ We will prove the first equality by using induction
on $m \geq 3$. The second one can be shown by a similar way.
For simplicity, let $g(s,t):=f(s\otimes t\otimes h)$ in this proof.
For $m=3$,
\begin{align*}
 Q(s,t_1+t_2+t_3,h)~ = ~&g(s,s)g(t_1+t_2+t_3,t_1+t_2+t_3)-g(s,t_1+t_2+t_3)^2\\
 ~ = ~&g(s,s)\big\{g(t_1,t_1)+g(t_2,t_2)+g(t_3,t_3)+2g(t_1,t_2)+2g(t_1,t_3)\\
 ~~&+2g(t_2,t_3)\big\}-\big\{g(s,t_1)^2+g(s,t_2)^2+g(s,t_3)^2+2g(s,t_1)g(s,t_2)\\
 ~~&+2g(s,t_1)g(s,t_3)+2g(s,t_2)g(s,t_3)\big\}.
\end{align*}
Since for $1\leq i<j\leq 3$
\begin{align*}
Q(s,t_i+t_j,h)~ = ~&g(s,s)g(t_i+t_j,t_i+t_j)-g(s,t_i+t_j)^2 \\
~ = ~&g(s,s)\big\{g(t_i,t_i)+g(t_j,t_j)+2g(t_i,t_j)\big\}-\big\{g(s,t_i)^2+g(s,t_j)^2+2g(s,t_i)g(s,t_j)\big\},
\end{align*}
we can calculate the difference
\begin{align*}
&Q(s,t_1+t_2,h)+Q(s,t_1+t_3,h)+Q(s,t_2+t_3,h)-Q(s,t_1+t_2+t_3,h)  \\
&~\quad = ~ g(s,s)\big\{g(t_1,t_1)+g(t_2,t_2)+g(t_3,t_3)\big\}-\big\{g(s,t_1)^2+g(s,t_2)^2+g(s,t_3)^2\big\}\\
&~\quad = ~ Q(s,t_1,h)+Q(s,t_2,h)+Q(s,t_3,h)
\end{align*}
Hence, for $m=3$, the statement does hold.

For $m \geq 4$, now let us denote $Q(s,t,h)$ by $F(t)$ for the simplicity. By induction hypothesis, we have
\begin{align*}
F(t_1 + \cdots + t_m ) ~ = ~& F(t_1 + \cdots + (t_{m-1} + t_m ) ) \\
~ = ~& \sum_{1\leq i<j\leq m-2} F(t_i+t_j) + \sum_{i=1}^{m-2} F(t_i+t_{m-1}+t_m)\\
~ ~&-(m-3)\Big\{\sum_{i=1}^{m-2}
F(t_i) +F(t_{m-1}+t_m)\Big\}
\end{align*}

and
\begin{equation*}
F(t_i+t_{m-1}+t_m) = F(t_i + t_{m-1} ) + F(t_i + t_m ) + F(t_{m-1}
+t_m ) - F(t_i ) - F (t_{m-1} ) - F(t_m )
\end{equation*}
for each $1 \leq i \leq m-2$. Hence, by combining these identities,
we get the desired formula for $Q(s,t_1+t_2+\cdots+t_m,h)$.

\noindent $(4)$ From Lemma \ref{exchange}.(1) and (3), we have
\begin{align*}
Q(s+t,u,h) &~ = ~ Q(s+t,s+t+u,h)  \\
&~ = ~  Q(s+t,s+u,h)+Q(s+t,t+u,h)-2Q(s,t,h) -Q(s+t,u,h)
\end{align*}
and hence
\begin{equation}\label{eq:a common term}
Q(s+t,s+u,h)+Q(s+t,t+u,h) =2Q(s,t,h)+2Q(s+t,u,h).
\end{equation}
By permuting $s,t$ and $u$ in (\ref{eq:a common term}), it follows that
\begin{equation}\label{eq:a corollary 1}
Q(s+t,s+u,h)+Q(s+u,t+u,h)=2Q(s,u,h)+2Q(s+u,t,h)
\end{equation}
and
\begin{equation}\label{eq:a corollary 2}
Q(s+t,t+u,h)+Q(s+u,t+u,h)=2Q(t,u,h)+2Q(t+u,s,h).
\end{equation}
Now, the desired identity comes from (\ref{eq:a common term}), (\ref{eq:a corollary 1}) and (\ref{eq:a corollary 2}).
\end{proof}

Now we are ready to give a \\

\noindent {\bf Proof of Theorem \ref{thm:producingR3Qs}.} Consider
the general member
\begin{equation*}
G := Q (a_1 s_1 + \cdots + a_p s_p , b_1 s_1 + \cdots + b_p s_p ,
c_1 h_1 + \cdots + c_q h_q )
\end{equation*}
of the image of the map $Q_{L_1 ,L_2}$ where $a_1 ,\ldots , a_p ,
b_1 , \ldots , b_p , c_1 , \ldots , c_q$ are from $\Bbbk$. By
applying Lemma \ref{exchange}.(1) and (3) repeatedly, one can see
that $G$ is a $\Bbbk$-linear combination of the quadratic equations of the form
\begin{equation*}
Q( \alpha_1 s_i +  \beta_1 s_j , \alpha_2 s_k +  \beta_2 s_l ,  \alpha_3 h_m +  \beta_3 h_n )
\end{equation*}
where $\alpha_u , \beta_v \in \Bbbk$. Then, by Lemma \ref{exchange}.(2), we may assume that $\alpha_u , \beta_v \in \{ 0,1 \}$.
Finally, Lemma \ref{exchange}.(4) enables us to exclude the cases where $\{i,j \} \cap \{k,l \}$ is not empty.  \qed \\

We finish this section by applying Theorem \ref{thm:producingR3Qs}
to the rational normal curves.

\begin{corollary}\label{cor:rational normal curve}
For every $d \geq 2$, let $\mathcal{C}_d \subset \P^d$ be the
standard rational normal curve of degree $d$. Then
$I(\mathcal{C}_d)$ is generated by $\Gamma (\mathcal{O}_{\P^1}
(1),\mathcal{O}_{\P^1} (d-2))$. In particular, every rational normal
curve satisfies property $\QR(3)$.
\end{corollary}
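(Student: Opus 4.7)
The plan is to invoke Theorem~\ref{thm:producingR3Qs} and reduce the claim to showing that $W := W_{\mathcal{O}_{\P^1}(1), \mathcal{O}_{\P^1}(d-2)}$ equals the whole space $I(\mathcal{C}_d)_2$. Since $h^0(\P^1, \mathcal{O}_{\P^1}(1)) = 2$, one has $\Delta_1 = \{(1,2)\}$ and $\Delta_2 = \Delta_3 = \emptyset$, so $\Gamma := \Gamma(\mathcal{O}_{\P^1}(1), \mathcal{O}_{\P^1}(d-2))$ reduces to $\Gamma_{11}$. Take $y_0, y_1$ as the standard basis of $H^0(\P^1, \mathcal{O}_{\P^1}(1))$, the monomials $h_a := y_0^a y_1^{d-2-a}$, $0 \le a \le d-2$, as the standard basis of $H^0(\P^1, \mathcal{O}_{\P^1}(d-2))$, and set $x_j := f(y_0^j y_1^{d-j})$ for the coordinates on $\P^d$. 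A direct computation from the definition of $Q$ gives
\[
f_a := Q(y_0, y_1, h_a) = x_a x_{a+2} - x_{a+1}^2 \qquad (0 \le a \le d-2),
\]
together with $Q(y_0, y_1, h_a + h_b) = f_a + f_b + M_{a,b}$ for $0 \le a < b \le d-2$, where
\[
M_{a,b} := x_a x_{b+2} + x_{a+2} x_b - 2 x_{a+1} x_{b+1}.
\]
Hence $W$ is the $\Bbbk$-span of the $f_a$'s and the $M_{a,b}$'s.

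The main step is then to show that every adjacent $2 \times 2$ minor
\[
p_{i,j} := x_i x_{j+1} - x_{i+1} x_j \qquad (0 \le i < j \le d-1)
\]
of the $2 \times d$ catalecticant matrix belongs to $W$. Once this is done, since the $\binom{d}{2}$ minors $p_{i,j}$ classically generate $I(\mathcal{C}_d)$ and $I(\mathcal{C}_d)$ is ideal-theoretically generated by quadrics, one concludes $W = I(\mathcal{C}_d)_2$; Theorem~\ref{thm:producingR3Qs} then yields that $\Gamma$ generates $I(\mathcal{C}_d)$, as desired.

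I will establish $p_{i,j} \in W$ by induction on $k := j - i$. The base case $k = 1$ is the identity $p_{i, i+1} = f_i$. For $k \ge 2$ the essential ingredient is the telescoping identity
\[
p_{i,j} = M_{i, j-1} + p_{i+1, j-1},
\]
which is a one-line expansion and is valid whenever $0 \le i < j-1 \le d-2$, i.e.\ precisely when $k \ge 2$ and $j \le d-1$. Since $p_{i+1, j-1}$ has smaller gap $k-2$ (it is $0$ when $k=2$ and $f_{i+1}$ when $k=3$), the induction closes.

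I do not anticipate any serious obstacle. The entire argument hinges on spotting the identity $p_{i,j} = M_{i, j-1} + p_{i+1, j-1}$, after which the induction and the direct identification of $W$ as the span of the $f_a$'s and $M_{a,b}$'s are routine. As a side benefit, the proof is characteristic-free (beyond the standing assumption $\mathrm{char}~\Bbbk \neq 2$), which is exactly what will be needed for the $n=1$ case of Theorem~\ref{thm:Veronese2}.
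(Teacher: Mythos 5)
Your proposal is correct and follows essentially the same route as the paper: both compute $\Gamma(\mathcal{O}_{\P^1}(1),\mathcal{O}_{\P^1}(d-2))$ explicitly, observe that only $\Gamma_{11}$ survives, and reduce everything to the span of the quadrics $f_a = x_ax_{a+2}-x_{a+1}^2$ and $M_{a,b}=x_ax_{b+2}+x_{a+2}x_b-2x_{a+1}x_{b+1}$ (the paper's $F_i$ and $G'_{i,j}$). The only divergence is the final linear-algebra step: the paper shows these $\binom{d}{2}$ quadrics are linearly independent (distinct lex leading terms) and hence span the $\binom{d}{2}$-dimensional space $I(\mathcal{C}_d)_2$, whereas you write the standard $2\times 2$ minors as explicit combinations via the telescoping identity $p_{i,j}=M_{i,j-1}+p_{i+1,j-1}$; both verifications are routine, correct, and characteristic-free.
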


\begin{proof}
Consider the $Q$-map about the decomposition
\begin{equation*}
\mathcal{O}_{\P^1} (d) = \mathcal{O}_{\P^1} (1)^{\otimes 2} \otimes
\mathcal{O}_{\P^1} (d-2).
\end{equation*}
Let $\{s,t \}$ be a basis for $H^0 (\P^1 ,\mathcal{O}_{\P^1} (1))$.
So, $\{ s^{d-2-i} t^i ~|~ 0 \leq i \leq d-2 \}$ is a basis for $H^0
(\P^1 ,\mathcal{O}_{\P^1} (d-2))$. Then $\Gamma := \Gamma
(\mathcal{O}_{\P^1} (1) ,\mathcal{O}_{\P^1} (d-2))$ is equal to
\begin{equation*}
\big\{ Q(s ,t , s^{d-2-i} t^i) ~|~ 0 \leq i \leq d-2 \big\} \bigcup
\big\{ Q(s,t,s^{d-2-i} t^i + s^{d-2-j} t^j) ~|~ 0 \leq i<j\leq d-2
\big\}.
\end{equation*}
Note that $\Gamma
(\mathcal{O}_{\P^1} (1) ,\mathcal{O}_{\P^1})=\{ Q(s,t,1)=f(s^2)f(t^2)-f(st)^2=z_0 z_2 - z_1^2 \}$.
Now, we will show that $\Gamma$ has exactly $\binom{d}{2}$ quadrics
of rank $3$ and that they are $\Bbbk$-linearly independent. Let $z_0
, z_1 , \ldots , z_d$ be the homogeneous coordinates of $\P^d$. Then
\begin{equation*}
F_i:=Q(s ,t , s^{d-2-i} t^i) = z_i z_{i+2} - z_{i+1} ^2
\end{equation*}
and
\begin{equation*}
G_{i,j}:=Q(s,t,s^{d-2-i} t^i + s^{d-2-j} t^j) = (z_i + z_j )
(z_{i+2} + z_{j+2} ) - (z_{i+1} + z_{j+1} )^2 .
\end{equation*}
Let
$G_{i,j}':=G_{i,j}-F_i-F_j=z_iz_{j+2}+z_jz_{i+2}-2z_{i+1}z_{j+1}$.
The leading terms of $F_i$'s are
$z_iz_{i+2}$ and those of $G_{i,j}'$'s are $z_i z_{j+2}$ in the standard lexicographic order. They are all distinct, because $i<j$. It
means that all $F_i$'s and $G_{i,j}'$'s are $\kk$-linearly
independent and
\begin{equation*}
|\Gamma (\mathcal{O}_{\P^1} (1) ,\mathcal{O}_{\P^1} (d-2))| =
(d-1)+\binom{d-1}{2}=\binom{d}{2} = {\dim}_{\Bbbk}~I(\mathcal{C}_d
)_2 .
\end{equation*}
In consequence, it is shown that $\Gamma$ generates $I(\mathcal{C}_d
)$.
\end{proof}

\section{Second Veronese embeddings}\label{sect3}
\noindent This section is devoted to solving the problem whether the
second Veronese variety satisfies property $\QR(3)$ or not. It is
interesting that the answer for this question depends on the
characteristic of the base field $\Bbbk$.

\begin{theorem}\label{thm:secondVeronese}
Let $n$ be any positive integer. Then

\renewcommand{\descriptionlabel}[1]%
             {\hspace{\labelsep}\textrm{#1}}
\begin{description}
\setlength{\labelwidth}{13mm} \setlength{\labelsep}{1.5mm}
\setlength{\itemindent}{0mm}
\item[{\rm (1)}] Suppose that ${\rm char}~\Bbbk \neq 2,3$. Then $I(V_{n,2})$ is generated by
$\Gamma (\mathcal{O}_{\P^n} (1),\mathcal{O}_{\P^n})$. In particular,
$(\P^n , \mathcal{O}_{\P^n} (2))$ satisfies property $\QR(3)$.

\item[{\rm (2)}] If ${\rm char}~\Bbbk =3$, then $(\P^n , \mathcal{O}_{\P^n} (2))$ satisfies property
$\QR(3)$ if and only if $n \leq 2$.
\end{description}
\end{theorem}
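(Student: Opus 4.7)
The plan is to apply the framework of Section~\ref{sect2} to the decomposition $\mathcal{O}_{\P^n}(2) = \mathcal{O}_{\P^n}(1)^{\otimes 2} \otimes \mathcal{O}_{\P^n}$. Fix a basis $x_0,\ldots,x_n$ of $H^0(\P^n,\mathcal{O}_{\P^n}(1))$ and $\{1\}$ for $H^0(\P^n,\mathcal{O}_{\P^n})$, and set $z_{ij} := f(x_ix_j)$ for $0 \le i \le j \le n$; these are the coordinates on $\P^N$, and $I(V_{n,2})$ is generated by the $2\times 2$ minors of the symmetric matrix $(z_{ij})$. By Theorem~\ref{thm:producingR3Qs}, the subspace $W := W_{L_1,L_2}\subseteq I(V_{n,2})_2$ is spanned by $\Gamma := \Gamma(\mathcal{O}_{\P^n}(1),\mathcal{O}_{\P^n})$. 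Thus part~(1) and the $n\le 2$ half of part~(2) reduce to showing $W = I(V_{n,2})_2$, while the $n\ge 3$ half of part~(2) reduces to the twin claims that every rank~$3$ element of $I(V_{n,2})_2$ belongs to $W$ and that $W\subsetneq I(V_{n,2})_2$.

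For the positive direction I would expand the three types of $Q$-elements in $z_{ij}$-coordinates and reduce modulo previously-produced minors. The successive outputs are: (i) the diagonal minors $z_{ii}z_{jj}-z_{ij}^2$ from $\Gamma_{11}$; (ii) $2(z_{ij}z_{kk}-z_{ik}z_{jk})$ from $\Gamma_{12}$ after subtracting two elements of (i), yielding the three-index minors on dividing by $2$; and (iii) for each $4$-set $\{i,j,k,l\}$, the three pairings produce $\Gamma_{22}$-elements whose reductions modulo (i) and (ii) form the coefficient vectors $(4,-2,-2), (-2,4,-2), (-2,-2,4)$ in $\Bbbk\langle z_{ij}z_{kl},\,z_{ik}z_{jl},\,z_{il}z_{jk}\rangle$. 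These three vectors always sum to zero, so they span at most a $2$-plane; this plane coincides with the $2$-dimensional Pl\"ucker-like subspace $\{\alpha+\beta+\gamma=0\}$, which is exactly the four-index part of $I(V_{n,2})_2$, precisely when the matrix
\begin{equation*}
M=\begin{pmatrix}4&-2&-2\\-2&4&-2\\-2&-2&4\end{pmatrix}
\end{equation*}
has rank~$2$, i.e.\ in every characteristic with $2$ and $3$ invertible. Since the $2\times 2$ minors of $(z_{ij})$ generate $I(V_{n,2})$, the outputs of (i)--(iii) together span $I(V_{n,2})_2$, proving part~(1). For $n\le 2$ there are no four distinct indices, $\Delta_3=\emptyset$, only (i)--(ii) are invoked, and the argument runs for any char~$\ne 2$, giving the positive direction of part~(2).

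For the negative direction of part~(2) (char~$=3$, $n\ge 3$) the plan has three steps. \textbf{Step 1:} Prove the structural claim that any rank~$3$ quadric $q\in I(V_{n,2})_2$ equals $Q(s,t,1)$ for some linearly independent $s,t\in H^0(\P^n,\mathcal{O}_{\P^n}(1))$. Writing $q$ in its normal form $L_1L_2-L_3^2$ with linearly independent linear forms $L_i$ on $\P^N$ and pulling back through the Veronese map gives the polynomial identity $L_1(x)L_2(x)=L_3(x)^2$ in $\Bbbk[x_0,\ldots,x_n]$. Since each $L_i(x)$ is a quadratic form, unique factorization forces $L_3(x)=s(x)t(x)$ with $L_1(x)=s(x)^2$ and $L_2(x)=t(x)^2$, whence $q=Q(s,t,1)\in W$. \textbf{Step 2:} For $n=3$, in char~$3$ the matrix $M$ has every entry equal to~$1$ and hence rank~$1$, so the $\Gamma_{22}$-contribution per $4$-set shrinks from two dimensions to one, giving $\dim W = 19 < 20 = \dim I(V_{3,2})_2$. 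By Step~1, every rank-$3$ element of $I(V_{3,2})$ lies in $W$, so no rank-$3$ generating set of $I(V_{3,2})$ exists, and $\QR(3)$ fails. \textbf{Step 3:} For $n\ge 4$, invoke Remark~\ref{rmk:ideal version of Mumford's observation} to realize $V_{3,2}$ as an ideal-theoretic linear section of $V_{n,2}$; any rank-$\le 3$ generating set of $I(V_{n,2})$ would restrict to one of $I(V_{3,2})$, contradicting Step~2.

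The positive direction is routine linear algebra once organized around the rank drop of~$M$, and the linear-section reduction is standard. The main conceptual obstacle is Step~1 of the negative direction: turning the generic rank~$3$ hypothesis on a quadric in $I(V_{n,2})_2$ into the rigid algebraic shape $f(s^2)f(t^2)-f(st)^2$. Unique factorization in $\Bbbk[x_0,\ldots,x_n]$ is what makes this rigidity possible.
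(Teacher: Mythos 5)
Your proposal is correct and follows essentially the same route as the paper: the same coordinate expansion of $\Gamma_{11},\Gamma_{12},\Gamma_{22}$ with the rank drop of the matrix $\left(\begin{smallmatrix}4&-2&-2\\-2&4&-2\\-2&-2&4\end{smallmatrix}\right)$ in characteristic $3$, the same UFD argument showing every rank-$3$ quadric in $I(V_{n,2})_2$ lies in $W$, and the same ideal-theoretic linear section reduction for $n\ge 4$. The only (harmless) organizational difference is that you treat all $n$ at once by sorting the $2$-minors of $(z_{ij})$ by their index sets, whereas the paper first reduces to $n\le 3$ via Proposition \ref{prop:Veronese lower d} and then performs the identical computations there.
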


We begin with fixing a few notation related to the Veronese
embedding of projective spaces, which we use throughout the
remaining part of this paper.

\begin{notation and remarks}\label{not and rmk:Veronese}
By $\N_{0}$ we denote the set of non-negative integers. For the
integers $n \geq 1$ and $d \geq 2$, consider the Veronese variety
\begin{equation*}
V_{n,d} := \nu_d (\P^n ) \subset \P^{N(n,d)} \quad \mbox{where}
\quad N(n,d) = {{n+d} \choose {n}} -1.
\end{equation*}
For $A(n,d) := \{ (a_0 , \ldots ,a_n )~|~ a_i \in \N_0 \quad \mbox{and}
\quad a_0 +   \cdots + a_n = d \}$, let
\begin{equation*}
B(n,d) := \{ z_I ~|~ I \in A(n,d) \}
\end{equation*}
be the set of standard homogeneous coordinates of the projective
space $\P^{N(n,d)}$.

\renewcommand{\descriptionlabel}[1]%
             {\hspace{\labelsep}\textrm{#1}}
\begin{description}
\setlength{\labelwidth}{13mm} \setlength{\labelsep}{1.5mm}
\setlength{\itemindent}{0mm}
\item[{\rm (1)}] The homogeneous ideal of
$V_{n,d}$ in $\P^{N(n,d)}$ is generated by the set of quadrics
\begin{equation*}
\mathcal{Q} (n,d) := \{ z_I z_J - z_K z_L ~|~ I,J,K,L \in  A(n,d)
\quad \mbox{and} \quad I+J = K+L \}.
\end{equation*}
In particular, $(\P^n , \mathcal{O}_{\P^n} (d))$ does always satisfy $\QR(4)$.

\item[{\rm (2)}] For $I =(a_0 , a_1 , \ldots ,a_n ) \in A(n,d)$ and $k \in
\{0,1,\ldots , n \}$, we will denote $a_k$ by $I_k$ and $\displaystyle\sum_{i=0}^n a_i$ by $|I|$.

\item[{\rm (3)}] For each $0 \leq k \leq n$, consider the inclusion map from
$A(n-1,d)$ into $A(n,d)$ defined by inserting $0$ to the $k$th
location. This map identifies $B(n-1,d)$ with a subset of $B(n,d)$.
Also this map induces an inclusion map
\begin{equation*}
\iota_k : I( V_{n-1,d} )_2 \rightarrow I(V_{n,d} )_2 .
\end{equation*}
Moreover, $\iota_k$ maps the subset $\Gamma (\mathcal{O}_{\P^{n-1}}
(1) ,\mathcal{O}_{\P^{n-1}} (d-2))$ into $\Gamma
(\mathcal{O}_{\P^{n}} (1) ,\mathcal{O}_{\P^{n}} (d-2))$.

\item[{\rm (4)}] Suppose that $d \geq 3$. For each $0 \leq k \leq n$, consider the inclusion map from
$A(n,d-1)$ into $A(n,d)$ defined by adding $1$ to the $k$th
location. This corresponds to the multiplication map by $x_i$ of the
homogeneous coordinate ring of $\P^n$. This map identifies
$B(n,d-1)$ with a subset of $B(n,d)$. Also this map induces the map
$$\delta_k : I(V_{n,d-1})_2\rightarrow I(V_{n,d})_2.$$
Moreover, $\delta_k$ maps the subset $\Gamma (\mathcal{O}_{\P^{n}}
(1) ,\mathcal{O}_{\P^{n}} (d-3))$ into $\Gamma (\mathcal{O}_{\P^{n}}
(1) ,\mathcal{O}_{\P^{n}} (d-2))$.
\end{description}
\end{notation and remarks}
\smallskip

The following proposition plays a crucial role in the proof of
Theorem \ref{thm:secondVeronese}.

\begin{proposition}\label{prop:Veronese lower d}
If $I(V_{2d-1 ,d})$ is generated by $\Gamma (\mathcal{O}_{\P^{2d-1}}
(1),\mathcal{O}_{\P^{2d-1}} (d-2))$, then $I(V_{n ,d})$ is generated
by $\Gamma (\mathcal{O}_{\P^{n}} (1),\mathcal{O}_{\P^{n}} (d-2))$
for all $n \geq 2d-1$.
\end{proposition}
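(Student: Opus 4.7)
The plan is a direct reduction argument: every binomial quadric generator of $I(V_{n,d})$ involves at most $2d$ of the homogeneous coordinates, so it is the image of a quadric from $I(V_{2d-1,d})$ under a suitable composition of the inclusion maps $\iota_k$ of Notation and Remarks \ref{not and rmk:Veronese}(3), and the hypothesis then takes care of the rest.

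More concretely, by Notation and Remarks \ref{not and rmk:Veronese}(1), $I(V_{n,d})$ is generated by the binomials $z_I z_J - z_K z_L$ with $I,J,K,L \in A(n,d)$ and $T := I+J = K+L$. I first observe that $T \in \N^{n+1}$ has entries summing to $2d$, so its support $\mathrm{supp}(T)$ has cardinality at most $2d$, and $\mathrm{supp}(I), \mathrm{supp}(J), \mathrm{supp}(K), \mathrm{supp}(L)$ are all contained in $\mathrm{supp}(T)$. Using $n+1 \geq 2d$, I enlarge $\mathrm{supp}(T)$ to a subset $S \subseteq \{0, \ldots, n\}$ of cardinality exactly $2d$. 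Writing $\{0, \ldots, n\} \setminus S = \{k_1 < \cdots < k_r\}$ with $r = n - 2d + 1$, and relabelling the coordinates indexed by $S$ as $\{0, \ldots, 2d-1\}$, the quadruple $I,J,K,L$ is identified with some $I', J', K', L' \in A(2d-1, d)$ satisfying $I' + J' = K' + L'$, and the corresponding binomial $z_{I'} z_{J'} - z_{K'} z_{L'} \in I(V_{2d-1,d})_2$ is carried to $z_I z_J - z_K z_L \in I(V_{n,d})_2$ under the composition $\tilde\iota := \iota_{k_r} \circ \cdots \circ \iota_{k_1}$.

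By hypothesis, $\Gamma(\mathcal{O}_{\P^{2d-1}}(1), \mathcal{O}_{\P^{2d-1}}(d-2))$ generates $I(V_{2d-1,d})$, and since this ideal is generated in degree $2$, that set spans $I(V_{2d-1,d})_2$ as a $\kk$-vector space. Thus $z_{I'} z_{J'} - z_{K'} z_{L'}$ is a $\kk$-linear combination of elements of $\Gamma(\mathcal{O}_{\P^{2d-1}}(1), \mathcal{O}_{\P^{2d-1}}(d-2))$; applying the $\kk$-linear map $\tilde\iota$ and invoking Notation and Remarks \ref{not and rmk:Veronese}(3) repeatedly, which says that each $\iota_k$ carries $\Gamma$ into $\Gamma$, I conclude that $z_I z_J - z_K z_L$ is a $\kk$-linear combination of elements of $\Gamma(\mathcal{O}_{\P^{n}}(1), \mathcal{O}_{\P^{n}}(d-2))$. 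Since these binomials span $I(V_{n,d})_2$, so does $\Gamma(\mathcal{O}_{\P^{n}}(1), \mathcal{O}_{\P^{n}}(d-2))$, and therefore it generates the ideal.

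I do not anticipate a genuine obstacle; the argument is essentially a bookkeeping reduction exploiting that the number of non-zero entries of $T$ is bounded by $2d$, independently of $n$. The only point that merits care is checking that $\tilde\iota$ is compatible with the relabelling of $S$ as $\{0, \ldots, 2d-1\}$, which is immediate from the definition of each $\iota_k$ as \emph{inserting a zero} at a specified position.
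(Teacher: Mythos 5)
Your argument is correct and is essentially the paper's own proof: both rest on the observation that $|I+J|=2d$ forces the four exponent vectors to share at least one common zero coordinate once $n\geq 2d$, together with the fact that each $\iota_k$ carries $\Gamma$ into $\Gamma$. The paper packages this as an induction on $n$ removing one coordinate at a time via the pigeonhole principle, whereas you strip all the superfluous coordinates in a single step; the difference is purely organizational.
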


\begin{proof}
We use induction on $n \geq 2d-1$. The case $n = 2d-1$ is done by our assumption.

Suppose that $n \geq 2d$ and let $Q = z_I z_J-z_K z_L$ be an element
of $\mathcal{Q} (n,d)$. We need to show that $Q$ is a linear
combination of quadratic equations in $\Gamma (\mathcal{O}_{\P^{n}}
(1),\mathcal{O}_{\P^{n}} (d-2))$. Since $n \geq 2d$, by pigeonhole
principle, there exists $0\leq k\leq n$ such that $I_k = J_k = K_k =
L_k =0$. Now, by using the inclusion map
\begin{equation*}
\iota_k : I(V_{n-1,d} )_2 \rightarrow I(V_{n,d} )_2 ,
\end{equation*}
we can regard $Q$ as an element of $I( V_{n-1 , d})$. By induction
hypothesis, $Q$ is a linear combination of elements in $\Gamma
(\mathcal{O}_{\P^{n-1}} (1),\mathcal{O}_{\P^{n-1}} (d-2))$. Then, by
Notation and Remarks \ref{not and rmk:Veronese}.(3), $Q$ is also a linear
combination of elements in $\Gamma (\mathcal{O}_{\P^{n}}
(1),\mathcal{O}_{\P^{n}} (d-2))$. This completes the proof.
\end{proof}
\smallskip

Now, we give the\\

\noindent {\bf Proof of Theorem \ref{thm:secondVeronese}.} $(1)$ By
Proposition \ref{prop:Veronese lower d}, it is enough to check the
statement for the cases where $n=2$ and $n=3$. So, consider the
$Q$-map about the decomposition
\begin{equation*}
\mathcal{O}_{\P^n}(2)= \mathcal{O}_{\P^n}(1)^{\otimes 2}\otimes
\mathcal{O}_{\P^n}.
\end{equation*}
Let $\{ x_0,\ldots,x_n \}$ and $\{ 1 \}$ be respectively bases of
$H^0(\P^n , \mathcal{O}_{\P^n}(1))$ and $H^0(\P^n ,
\mathcal{O}_{\P^n})$. Thus the $\Bbbk$-vector space
$W_{\mathcal{O}_{\P^n}(1),\mathcal{O}_{\P^n}}$ defined in Notation
and Remarks \ref{nar:producingQofR3}.(3) is generated by the union
$\Gamma:=\Gamma (\mathcal{O}_{\P^{n}} (1),\mathcal{O}_{\P^{n}})= \Gamma_{11}
\cup \Gamma_{12} \cup \Gamma_{22}$ where
$$\begin{cases}
\Gamma_{11}=\big\{F_{i,j} := Q(x_i,x_j,1) ~|~ (i,j) \in \Delta_1  \big\},\\
\Gamma_{12}=\big\{G_{i,j,k} := Q(x_i , x_j + x_k,1)~|~ (i,j,k) \in \Delta_2   \big\} \quad \mbox{and}\\
\Gamma_{22}=\big\{H_{i,j,k,l} := Q(x_i +x_j,x_k +x_l,1 )~|~ (i,j,k,l)  \in \Delta_3  \big\}.
\end{cases}$$

For $n=2$, $\Gamma (\mathcal{O}_{\P^{2}} (1),\mathcal{O}_{\P^{2}})$
consists of the following six elements:
$$\begin{cases}
F_{0,1} := Q(x_0,x_1,1)=z_{200}z_{020}-z_{110}^2 \\
F_{0,2} := Q(x_0,x_2,1)=z_{200}z_{002}-z_{101}^2 \\
F_{1,2} := Q(x_1,x_2,1)=z_{020}z_{002}-z_{011}^2 \\
G_{0,1,2} := Q(x_0,x_1+x_2,1)=z_{200}(z_{020}+2z_{011}+z_{002})-(z_{110}+z_{101})^2 \\
G_{1,0,2} := Q(x_1,x_0+x_2,1)=z_{020}(z_{200}+2z_{101}+z_{002})-(z_{110}+z_{011})^2 \\
G_{2,0,1} := Q(x_2,x_0+x_1,1)=z_{002}(z_{200}+2z_{110}+z_{020})-(z_{101}+z_{011})^2
\end{cases}$$
One can easily check that the following three identities hold:
$$\begin{cases}
G_{0,1,2} = F_{0,1} + F_{0,2} +2(z_{200}z_{011}-z_{110}z_{101})\\
G_{1,0,2} = F_{0,1} + F_{1,2} +2(z_{020}z_{101}-z_{110}z_{011})\\
G_{2,0,1} = F_{0,2} + F_{1,2} +2(z_{002}z_{110}-z_{101}z_{011})
\end{cases}$$
Since ${\rm char}~\Bbbk \neq 2$, this verifies that the homogeneous ideal
\begin{equation*}
I(V_{2,2}) = \langle F_{0,1} , F_{0,2} , F_{1,2} ,
z_{200}z_{011}-z_{110}z_{101} , z_{020}z_{101}-z_{110}z_{011} , z_{002}z_{110}-z_{101}z_{011} \rangle
\end{equation*}
is generated by $\Gamma (\mathcal{O}_{\P^{2}}
(1),\mathcal{O}_{\P^{2}})$.

For $n=3$, note that $\dim_{\Bbbk}~I(V_{3,2})=20$ and $I(V_{3,2})$ is generated by the $2$-minors of
the $4\times 4$ symmetric matrix
$$A=\begin{pmatrix}
z_{2000} & z_{1100} & z_{1010} & z_{1001}\\
z_{1100} & z_{0200} & z_{0110} & z_{0101}\\
z_{1010} & z_{0110} & z_{0020} & z_{0011}\\
z_{1001} & z_{0101} & z_{0011} & z_{0002}
\end{pmatrix}.$$
For each $i \in \{1,2,3,4 \}$, let $A_i$ be the principal submatrix
of $A$ obtained by eliminating the $i$th row and column. Then
$2$-minors of $A_i$ are contained in $\iota_i (I(V_{2,2})_2 )$. So
they can be handled by the previous case. The remaining generators
not coming from these principal submatrices are
\begin{equation*}
R_1=z_{1100}z_{0011}-z_{1010}z_{0101}, ~R_2=z_{1100}z_{0011}-z_{1001}z_{0110} ~\mbox{and} ~
R_3=z_{1010}z_{0101}-z_{1001}z_{0110}.
\end{equation*}
Note that $R_3=-R_1 +R_2$. Thus it suffices to show that $R_1$ and $R_2$ are $\Bbbk$-linear
combinations of elements in $\Gamma=\Gamma (\mathcal{O}_{\P^{3}} (1),\mathcal{O}_{\P^{3}})$. Now, consider the set
\begin{equation*}
\Gamma_{22} = \{H_{0,1,2,3},H_{0,2,1,3} , H_{0,3,1,2} \}.
\end{equation*}
Letting $H'_{0,1,2,3}=4z_{1100}z_{0011}-2z_{1010}z_{0101}-2z_{1001}z_{0110}$, we have
\begin{align*}
H_{0,1,2,3} =& ~
(z_{2000}+2z_{1100}+z_{0200})(z_{0020}+2z_{0011}+z_{0002})-(z_{1010}+z_{1001}+z_{0110}+z_{0101})^2
\\
=& ~G_{0,2,3}+G_{1,2,3}+G_{2,0,1}+G_{3,0,1}-F_{0,2}-F_{0,3}-F_{1,2}-F_{1,3} + H'_{0,1,2,3} .
\end{align*}
Then it holds that the elements $H'_{0,1,2,3}$ is contained in
$W_{\mathcal{O}_{\P^3}(1),\mathcal{O}_{\P^3}}$.
Similarly, one can show that two elements
\begin{equation*}
H'_{0,2,1,3}=4z_{1010}z_{0101}-2z_{1100}z_{0011}-2z_{1001}z_{0110}
\end{equation*}
and
\begin{equation*}
H'_{0,3,1,2}=4z_{1001}z_{0110}-2z_{1100}z_{0011}-2z_{1010}z_{0101}
\end{equation*}
are also contained in $W_{\mathcal{O}_{\P^3}(1),\mathcal{O}_{\P^3}}$. Since
\begin{equation}\label{eq:char 3 linear relation}
H'_{0,1,2,3}=2R_1+2R_2,~ H'_{0,2,1,3}=-4R_1+2R_2 \quad \mbox{and}
\quad H'_{0,3,1,2}=2R_1-4R_2
\end{equation}
and since ${\rm char}~\Bbbk \neq 2,3$, it follows that
\begin{equation*}
R_1=\frac{1}{6}(H'_{0,1,2,3}-H'_{0,2,1,3}) \quad \mbox{and} \quad
R_2=\frac{1}{6}(H'_{0,1,2,3}-H'_{0,3,1,2})
\end{equation*}
are contained $W_{\mathcal{O}_{\P^3}(1),\mathcal{O}_{\P^3}}$. This
completes the proof that the ideal $I (V_{3,2})$ is generated by
$\Gamma$ and hence $(\P^3, \mathcal{O}_{\P^3}(2))$ satisfies
property $\QR(3)$.

\noindent $(2)$ Suppose that ${\rm char}~\Bbbk = 3$. First note that
Corollary \ref{cor:rational normal curve} for $(\P^1,
\mathcal{O}_{\P^1}(2))$ and the proof in $(1)$ for $(\P^2,
\mathcal{O}_{\P^2}(2))$ are still valid in ${\rm char}~\Bbbk=3$.
Thus, it is enough to show that $(\P^n, \mathcal{O}_{\P^n}(2))$
fails to satisfy property $\QR(3)$ if $n \geq 3$.

When $n=3$, we will first show that $W := W_{\mathcal{O}_{\P^3}(1),\mathcal{O}_{\P^3}}$ is a proper subset of $I(V_{3,2})_2$. Indeed, $W  = \langle \Gamma \rangle$ by Theorem \ref{thm:producingR3Qs} and $\Gamma$ has at most $21$ elements by (\ref{eq:upper bound}). Furthermore,
\begin{equation*}
\Gamma_{22} = \{ H_{0,1,2,3} ,  H_{0,2,1,3} ,  H_{0,3,1,2} \}
\end{equation*}
and $\Gamma_{11} \cup \Gamma_{12}$ has at most $18$ elements. From (\ref{eq:char 3 linear relation}), we obtain
\begin{equation*}
H'_{0,1,2,3}=  H'_{0,2,1,3}=  H'_{0,3,1,2}.
\end{equation*}
since ${\rm char}~\Bbbk = 3$. In particular, the image of $\Gamma_{22}$ in the quotient space $W / \langle \Gamma_{11} \cup \Gamma_{12} \rangle$ has at most one nonzero element. In consequence, we have
\begin{equation*}
\dim_{\Bbbk} ~ W  \leq | \Gamma_{11} \cup \Gamma_{12} | +1 = 19.
\end{equation*}
This shows that $W$ is strictly smaller than $I(V_{3,2})_2$ since $\dim_{\Bbbk} ~I(V_{3,2})_2 = 20$. Next, let $W'$ be the $\Bbbk$-vector space spanned by all rank $3$ elements in $I(V_{3,2})_2$. We claim that $W' = W$ and hence any quadric of rank $3$ in $I(V_{3,2})$ is contained in $W$. Obviously, this completes the proof. That is, $(\P^3, \mathcal{O}_{\P^3}(2))$ fails to satisfy property $\QR(3)$ since $W'$ is a proper subspace of $I(V_{3,2})_2$. To see this claim, consider the ring homomorphism
\begin{equation*}
\nu_2^{\#}: \Bbbk [z_0 , z_1 , \ldots ,
z_9 ] \to \Bbbk[x_0,\ldots,x_3]
\end{equation*}
corresponding to the Veronese embedding $\P^3\lhook\joinrel\xrightarrow{~\nu_2} V_{3,2}\subset\P^9$. Note that there is no rank $2$ element in $I(V_{3,2})$. If there exists a nonzero rank $2$ element $L_1^2+L_2^2\in I(V_{3,2})$, then $\nu_2^{\#}(L_1^2+L_2^2)=q_1^2+q_2^2=0$ and since $\Bbbk[x_0,\ldots,x_3]$ is a UFD, it means that $q_1=\pm i q_2$ for $i \in \Bbbk$ such that $i^2=-1$. This implies that $L_1=\pm i L_2$ and it contradict to the assumption that $L_1^2+L_2^2$ is nonzero. Let $\Theta = L_1^2-L_2L_3 \in I(V_{3,2})$ be a nonzero quadric of rank $3$ where $L_1 , L_2 ,
L_3 \in \Bbbk [z_0 , z_1 , \ldots , z_9 ]$ are linear forms. We may assume that every rank $3$ quadric is of this form via the transform $L_1^2+L_2^2+L_3^2\mapsto L_1^2-(L_2+i L_3)(-L_2+i L_3)$. Then $\nu_2^{\#}(L_1)=q_1, \nu_2^{\#}(L_2)=q_2, \nu_2^{\#}(L_3)=q_3$ for some quadratic forms $q_1 , q_2 , q_3$ in $\Bbbk[x_0,\ldots,x_3]$ such that
\begin{equation*}
 \nu_2^{\#}(\Theta) = q_1 ^2 - q_2 q_3 = 0.
\end{equation*}
We can check that $q_1$ is not proportional to $q_2$ (and also to $q_3$) since if it is, then the original quadric $L_1^2-L_2L_3$ has rank $2$ which is a contradiction. From the UFD property of  $\Bbbk[x_0,\ldots,x_3]$, it follows that $q_1=l_1l_2$, $q_2=l_1^2$ and $q_3=l_2^2$ for some linear forms $l_1 ,l_2 , l_3$. In consequence, $\Theta = -Q(l_1,l_2,1)$ is an element of $W$.

For $n \geq 4$, choose a hyperplane $H=\P^{n-1}$ of $\P^n$. Then one can check that
$V_{n-1,2} = \nu_2 (H)$ in $\P^{N(n-1,2)}$ is an ideal-theoretic linear section of
$V_{n,2} \subset \P^{N(n,2)}$ (cf. \cite[Proposition 2.1]{CP}). In particular, if $(\P^{n-1} , \mathcal{O}_{\P^{n-1}} (2))$ fails to satisfy property $\QR(3)$ then so does $(\P^{n} , \mathcal{O}_{\P^{n}} (2))$. This
completes the proof that $(\P^n, \mathcal{O}_{\P^n}(2))$ fails to
satisfy property $\QR(3)$ for all $n \geq 3$. \qed 

\begin{remark}
From Theorem \ref{thm:producingR3Qs}, one may ask if all the members of $\Gamma$ are $\Bbbk$-linearly independent or not.

\noindent $(1)$ For $(\P^2 , \mathcal{O}_{\P^2} (2))$, it is shown in the proof of Theorem \ref{thm:secondVeronese} that $\Gamma$ is a basis for $I(V_{2,2})$.

\noindent $(2)$ For $(\P^3 , \mathcal{O}_{\P^3} (2))$, suppose that ${\rm char}~\Bbbk \neq 2,3$. In the proof of Theorem \ref{thm:secondVeronese}, it is shown that $\Gamma$ spans the $20$-dimensional vector space $I(V_{3,2})_2$. Since $|\Gamma| \leq 21$, there exists at most one non-trivial $\Bbbk$-linear relation among the elements of $\Gamma$.
Note that the identity
\begin{equation}\label{eq:unique relation}
H'_{0,1,2,3}+H'_{0,2,1,3}+H'_{0,3,1,2}=0
\end{equation}
provides one such relation. In consequence, (\ref{eq:unique relation}) is essentially the unique linear relation among the elements of $\Gamma$.
\end{remark}
\smallskip

\section{Higher Veronese embeddings}\label{sect4}
\noindent This section is devoted to the proof of Theorem
\ref{thm:Veronese1}. So, we assume that ${\rm char}~\Bbbk \neq 2,3$.
We require a few more notation and definition.

\begin{notation and remarks}\label{additional not and rmk:Veronese}(Continued from Notation and Remarks \ref{not and rmk:Veronese}) Let
\begin{equation*}
V_{n,d} := \nu_d (\P^n ) \subset \P^{N } \quad \mbox{where}
\quad N= N(n,d) = {{n+d} \choose {n}} -1
\end{equation*}
be the $d$th Veronese embedding of $\P^n$ for some $n \geq 1$ and $d \geq 2$.

\renewcommand{\descriptionlabel}[1]%
             {\hspace{\labelsep}\textrm{#1}}
\begin{description}
\setlength{\labelwidth}{13mm} \setlength{\labelsep}{1.5mm}
\setlength{\itemindent}{0mm}

\item[{\rm (1)}] For $0 \leq i \leq n$, let $e_i$ denote the $i$th coordinate vector. Thus $A(n,1)=\{e_i ~|~ 0 \leq i \leq n \}$.
\item[{\rm (2)}] For $I \in  A(n,d)$, we define $\Supp(I)$ as the set $\{ k ~|~ 0 \leq k \leq n, ~I_k >0 \}$.
\item[{\rm (3)}] For simplicity, we denote the monomial $z_Iz_J$ by $[I,J]$. For example, an element $z_Iz_J-z_Kz_L$ of $\mathcal{Q} (n,d)$ is
denoted by $[I,J]-[K,L]$.
\item[{\rm (4)}] Throughout this section, we use the map $Q$, the ideal
$\mathfrak{Q}$ and the finite set $\Gamma := \Gamma (L_1 , L_2)$ in
Notation and Remarks \ref{nar:producingQofR3} for the pair $(L_1
,L_2 ) =(\mathcal{O}_{\P^n} (1),\mathcal{O}_{\P^n} (d-2))$. By
Theorem \ref{thm:producingR3Qs}, we know that $\mathfrak{Q}$ is
generated by $\Gamma$.
\item[{\rm (5)}] Let $q$ and $q'$ be two homogeneous quadratic equations. We say that
they are equivalently related and write $q\sim q'$ if $q-q'$ can be
represented by the sum of elements in $\Gamma$, or equivalently,
$q-q'$ is contained in $\mathfrak{Q}$. So a quadratic equation $q$
is equivalent to $0$ if and only if $q \in \mathfrak{Q}$.
\item[{\rm (6)}] From Notation and Remarks \ref{not and rmk:Veronese}.(1), one can
see that the following three statements are equivalent.
\begin{enumerate}
\item[$(i)$] $[I,J] \sim [K,L]$ for any $I,J,K,L \in A(n,d)$ satisfying $I+J = K+L$;
\item[$(ii)$] $I(V_{n,d}) = \mathfrak{Q}$;
\item[$(iii)$] $I(V_{n,d})$ is generated by $\Gamma$.
\end{enumerate}
In particular, $(\P^n , \mathcal{O}_{\P^n} (d))$ satisfies property
$\QR(3)$ if one of $(i) \sim (iii)$ holds.
\end{description}
\end{notation and remarks}

We will prove the following

\begin{theorem}\label{thm:finite generating set of rank 3 quadrics}
$I(V_{n,d})$ is generated by $\Gamma = \Gamma (\mathcal{O}_{\P^n}
(1),\mathcal{O}_{\P^n} (d-2))$.
\end{theorem}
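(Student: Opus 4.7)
The proof proceeds by double induction on $n \geq 1$ and $d \geq 2$, applying the criterion of Notation and Remarks \ref{additional not and rmk:Veronese}.(6): it suffices to show that $[I,J] \sim [K,L]$ for every exchange relation $I+J = K+L$ in $A(n,d)$, where $\sim$ denotes equivalence modulo the ideal $\mathfrak{Q}$ generated by $\Gamma$. The base cases $n=1$ and $d=2$ are handled by Corollary \ref{cor:rational normal curve} and part (1) of Theorem \ref{thm:secondVeronese} respectively. Proposition \ref{prop:Veronese lower d} further reduces the problem to the range $2 \leq n \leq 2d-1$, so for each fixed $d$ the induction on $n$ traverses only finitely many values.

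For the inductive step two ``trivial'' reductions dispose of most exchange relations at once. If some index $k \in \{0,\ldots,n\}$ satisfies $I_k = J_k = K_k = L_k = 0$, then $[I,J]-[K,L]$ lies in the image of $\iota_k$; by the induction hypothesis at $(n-1, d)$ together with Notation and Remarks \ref{not and rmk:Veronese}.(3), it belongs to $\mathfrak{Q}$. Alternatively, if some index $k$ satisfies $\min\{I_k, J_k, K_k, L_k\} \geq 1$, then subtracting $e_k$ from each of $I,J,K,L$ produces an exchange relation in $A(n, d-1)$ whose image under $\delta_k$ recovers $[I,J]-[K,L]$; the induction hypothesis at $(n, d-1)$ and Notation and Remarks \ref{not and rmk:Veronese}.(4) finish this case.

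The heart of the argument is thus the ``hard'' case in which every coordinate $k$ exhibits a \emph{mixed} pattern in $(I_k, J_k, K_k, L_k)$: at least one entry vanishes and at least one is positive. Using $I_k + J_k = K_k + L_k$, the possible local patterns, up to the symmetries $I \leftrightarrow J$ and $K \leftrightarrow L$, form a short list including $(0,a,0,a)$, $(0,a,a,0)$ and $(0, a+b, a, b)$ with $a,b \geq 1$. Combining these across the $n+1$ coordinates subject to the degree constraints $|I|=|J|=|K|=|L|=d$, and quotienting by the natural $S_{n+1}$-action on coordinates, yields a finite catalogue of global shapes --- precisely the ``more than ten partial cases'' alluded to in the introduction. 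For each such shape I will explicitly produce elements of $\Gamma$, built from $Q(s_i, s_j, h)$, $Q(s_i+s_j, s_k, h)$ or $Q(s_i+s_j, s_k+s_l, h)$ with $h$ a judiciously chosen monomial of degree $d-2$, whose $\Bbbk$-linear combination realises $[I,J]-[K,L]$ modulo relations already lying in $\mathfrak{Q}$ by induction, possibly after normalising the shape by a rank-preserving $\PGL_n(\Bbbk)$-transformation as discussed in the introduction.

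The main obstacle is not a single conceptual difficulty but the combinatorial bookkeeping of these shapes: ensuring that every hard type is covered, and that the scalars appearing in the explicit cancellations are invertible in $\Bbbk$. The exclusion of characteristic $3$ enters precisely at this last step, mirroring the phenomenon already visible in the $(\P^3,\mathcal{O}_{\P^3}(2))$ calculation of Theorem \ref{thm:secondVeronese}, where the three elements $H'_{0,1,2,3}$, $H'_{0,2,1,3}$, $H'_{0,3,1,2}$ collapse to a single class modulo $3$. Once every hard shape has been verified, the double induction closes and the theorem follows.
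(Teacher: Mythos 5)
Your skeleton matches the paper's: the same double induction on $(n,d)$ with base cases $n=1$ and $d=2$, the reduction to showing $[I,J]\sim[K,L]$ for all exchange relations, and the two ``trivial'' reductions via $\iota_k$ (a coordinate where all four multidegrees vanish) and $\delta_k$ (a coordinate where all four are positive), which are exactly Lemma \ref{lem:induction}. But everything after that is a promise rather than a proof, and the promised part is where all the difficulty of the theorem lives. Concretely, two things are missing. First, you never establish the mechanism by which elements of $\Gamma$ produce equivalences between monomials: the paper's Lemma \ref{lem:gamma} expands $Q(x_i,x_j,\mathbf{x}^I)$, $Q(x_i,x_j+x_k,\mathbf{x}^I)$, $Q(x_i+x_j,x_k+x_l,\mathbf{x}^I)$ and their two-monomial variants $h=\mathbf{x}^I+\mathbf{x}^J$ to extract six basic exchange moves, and it is in relations (3) and (6) of that lemma --- where one must divide by $6$ after an antisymmetrization --- that the hypothesis $\mathrm{char}\,\Bbbk\neq 2,3$ is actually used. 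You correctly guess that invertibility of scalars is the issue, but without exhibiting these identities there is no way to ``realise $[I,J]-[K,L]$'' from $\Gamma$ at all. Second, the global case analysis is not carried out. Your classification by local patterns at each coordinate is a reasonable organizing principle, but gluing local patterns into a proof requires genuine work: the paper needs an intermediate monomial $[M,N]$ with $I+J=M+N=K+L$ to handle the case $\Supp(I)\cap\Supp(J)\neq\emptyset\neq\Supp(K)\cap\Supp(L)$ when Lemma \ref{lem:induction} does not apply directly, plus three further lemmas (Lemmas \ref{lem:311case}, \ref{lem:1221case}, \ref{lem:1100case}) to resolve the remaining shapes, and a check that the degenerate shapes force $d\le 2$ and hence cannot occur.

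A smaller but real inaccuracy: you describe the $\PGL_n(\Bbbk)$-action as ``normalising the shape.'' Coordinate permutations do that, but the substantive use of the group action in the paper (Lemma \ref{lem:1100case}) is different: one applies the non-diagonal substitution $x_0\mapsto x_0+x_1$ to relations already known to lie in $\mathfrak{Q}$ and extracts a \emph{new} relation from the resulting identity. That trick is needed for the shapes where all nonzero exponents equal $1$, and nothing in your catalogue of local patterns substitutes for it. As written, your proposal is a correct plan with the right entry points, but it does not yet constitute a proof.
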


As is mentioned in Notation and Remarks \ref{additional not and
rmk:Veronese}.(6), Theorem \ref{thm:finite generating set of rank 3
quadrics} implies Theorem \ref{thm:Veronese1}.

In order to prove Theorem \ref{thm:finite generating set of rank 3
quadrics}, we will use the double induction on $n$ and $d$. Indeed,
Theorem \ref{thm:finite generating set of rank 3 quadrics} is
already proven for $n=1$ in Corollary \ref{cor:rational normal curve} and
for $d=2$ in Theorem \ref{thm:secondVeronese}, respectively. From now on, let $n \geq 2$ and $d\geq 3$. Also we assume that \\

\begin{enumerate}
\item[$(\dagger)$] the statement of Theorem \ref{thm:finite generating set of rank 3 quadrics}
is true for $(\P^{n'} , \mathcal{O}_{\P^{n'}} (d'))$ whenever $n' < n$ and $d' < d$. \\
\end{enumerate}

\noindent Assuming these, we will first prove some lemmas.

\begin{lemma}\label{lem:gamma}
Suppose that $d \geq 3$ and let $I$ and $J$ be two elements in $A(n,d-2)$. Then

\renewcommand{\descriptionlabel}[1]%
             {\hspace{\labelsep}\textrm{#1}}
\begin{description}
\setlength{\labelwidth}{13mm} \setlength{\labelsep}{1.5mm}
\setlength{\itemindent}{0mm}
\item[$(1)$] $[2e_i+I,2e_j+I]\sim [e_i+e_j+I,e_i+e_j+I]$
\item[$(2)$] $[2e_i+I,e_j+e_k+I]\sim [e_i+e_j+I,e_i+e_k+I]$
\item[$(3)$] $[e_i+e_j+I,e_k+e_l+I]\sim [e_i+e_k+I,e_j+e_l+I]$
\item[$(4)$] $[2e_i+I,2e_j+J]+[2e_j+I,2e_i+J]\sim 2[e_i+e_j+I,e_i+e_j+J]$
\item[$(5)$] $[2e_i+I,e_j+e_k+J]+[e_j+e_k+I,2e_i+J]$

$\quad \quad \quad \quad \quad \quad \quad  \quad \quad \quad \quad
\quad \sim [e_i+e_j+I,e_i+e_k+J]+[e_i+e_k+I,e_i+e_j+J]$
\item[$(6)$] $[e_i+e_j+I,e_k+e_l+J]+[e_k+e_l+I,e_i+e_j+J]$

$\quad \quad \quad \quad \quad \quad \quad  \quad \quad \quad \quad
\quad \sim [e_i+e_k+I,e_j+e_l+J]+[e_j+e_l+I,e_i+e_k+J]$
\end{description}
\end{lemma}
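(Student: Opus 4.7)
The plan is to prove each of the six equivalences by evaluating the map $Q=Q_{L_1,L_2}$ from Notation and Remarks \ref{nar:producingQofR3} on a carefully chosen triple, expanding via the definition of $f$, and then reducing modulo the identities already proved. The key observation is that for any $s,t\in H^0(\P^n,\mathcal{O}_{\P^n}(1))$ and any $h\in H^0(\P^n,\mathcal{O}_{\P^n}(d-2))$ the element $Q(s,t,h)$ lies in $\mathfrak{Q}$; in particular $Q(x_i,x_j,x^I)$, $Q(x_i,x_j+x_k,x^I)$, $Q(x_i+x_k,x_j+x_l,x^I)$ and the corresponding expressions with $x^I$ replaced by $x^I+x^J$ are all $\sim 0$.

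First I would dispatch the single-base identities $(1)$, $(2)$, $(3)$. Identity $(1)$ is just the definition of $Q(x_i,x_j,x^I)$ read modulo $\mathfrak{Q}$. For $(2)$, expanding $Q(x_i,x_j+x_k,x^I)\sim 0$ and using $(1)$ to cancel the two ``doubled'' terms $[2e_i+I,2e_j+I]$ and $[2e_i+I,2e_k+I]$ against the ``squared'' terms $[e_i+e_j+I,e_i+e_j+I]$ and $[e_i+e_k+I,e_i+e_k+I]$ produces
\[
2\bigl([2e_i+I,\,e_j+e_k+I]-[e_i+e_j+I,\,e_i+e_k+I]\bigr)\sim 0,
\]
which yields $(2)$ because $\mathrm{char}\,\Bbbk\neq 2$. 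For $(3)$, a similar but longer expansion of $Q(x_i+x_k,x_j+x_l,x^I)\sim 0$, followed by systematic use of $(1)$ and $(2)$ to annihilate every diagonal and mixed-double term, produces
\[
2[e_i+e_k+I,\,e_j+e_l+I]\;\sim\;[e_i+e_j+I,\,e_k+e_l+I]+[e_i+e_l+I,\,e_j+e_k+I].
\]
Applying the same reasoning to $Q(x_i+x_j,x_k+x_l,x^I)$ and to $Q(x_i+x_l,x_j+x_k,x^I)$ yields two further cyclic relations, and the resulting system $2A\sim B+C$, $2B\sim A+C$, $2C\sim A+B$ forces $A\sim B\sim C$ as soon as $\mathrm{char}\,\Bbbk\neq 3$. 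This is precisely $(3)$, and it is the one place where the hypothesis $\mathrm{char}\,\Bbbk\neq 3$ is essential.

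The two-base identities $(4)$, $(5)$, $(6)$ then follow by the same pattern applied to the polarization
\[
Q(s,t,x^I+x^J)-Q(s,t,x^I)-Q(s,t,x^J)\;\sim\;0,
\]
which isolates exactly the monomials that genuinely mix $I$ and $J$. Taking $(s,t)=(x_i,x_j)$ gives $(4)$ immediately. Taking $(s,t)=(x_i,x_j+x_k)$ and using $(4)$ to cancel the $[2e_\ast+I,2e_\ast+J]$-type cross terms yields $(5)$. Finally, taking $(s,t)=(x_i+x_k,x_j+x_l)$ and cancelling every unwanted cross term via $(3)$, $(4)$ and $(5)$ leaves precisely $(6)$. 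The main obstacle is $(3)$: once it is established, every remaining step is a bookkeeping exercise on the terms produced by the definition of $Q$ and the successive cancellations afforded by the previously proved identities.
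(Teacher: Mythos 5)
Your treatment of items $(1)$--$(5)$ is correct and follows essentially the same route as the paper: expand $Q$ on the indicated triples, cancel against the previously established items, and for $(3)$ symmetrize over the three pairings of $\{i,j,k,l\}$ and use ${\rm char}\,\Bbbk\neq 3$ to separate $A\sim B\sim C$ from the system $2A\sim B+C$, etc. (the paper subtracts two of the three relations and divides by $6$, which is the same computation). Your ``polarization'' $Q(s,t,\mathbf{x}^I+\mathbf{x}^J)-Q(s,t,\mathbf{x}^I)-Q(s,t,\mathbf{x}^J)\sim 0$ is also exactly what the paper does for $(4)$ and $(5)$, just phrased more cleanly.

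There is, however, a genuine gap in your step $(6)$. Writing $A=[e_i+e_j+I,e_k+e_l+J]+[e_k+e_l+I,e_i+e_j+J]$, $B=[e_i+e_k+I,e_j+e_l+J]+[e_j+e_l+I,e_i+e_k+J]$ and $C=[e_i+e_l+I,e_j+e_k+J]+[e_j+e_k+I,e_i+e_l+J]$, the single polarization with $(s,t)=(x_i+x_k,x_j+x_l)$, after all cancellations via $(4)$ and $(5)$, leaves the three-term relation $2B\sim A+C$, not the two-term relation $A\sim B$ asserted in $(6)$. The residual term $C$ cannot be removed by $(3)$, because $(3)$ only identifies pairings of monomials sharing the \emph{same} base $I$, whereas the summands of $C$ mix the bases $I$ and $J$; so ``cancelling every unwanted cross term via $(3)$, $(4)$ and $(5)$'' does not leave precisely $(6)$. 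You must repeat for $(6)$ exactly the symmetrization you used for $(3)$: take a second polarization, e.g.\ with $(s,t)=(x_i+x_j,x_k+x_l)$, to get $2A\sim B+C$, subtract the two relations to obtain $3A\sim 3B$, and divide by $3$. In particular your remark that $(3)$ is ``the one place where ${\rm char}\,\Bbbk\neq 3$ is essential'' is wrong: the hypothesis is needed again, and for the same reason, in $(6)$ --- which is how the paper concludes its proof.
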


\begin{proof}
To use the $Q$-map about the decomposition $\mathcal{O}_{\P^n} (d) =
\mathcal{O}_{\P^n} (1)^{\otimes 2} \otimes \mathcal{O}_{\P^n}
(d-2)$, let $\{x_0 , x_1 , \ldots , x_n \}$ be a basis of $H^0 (\P^n
,\mathcal{O}_{\P^n} (1))$.
\smallskip

\noindent $(1)$ For an element in $\Gamma_{11}$,
\begin{align*}
Q(x_i,x_j,\mathbf{x}^I) & \quad = \quad  f(x_i^2\mathbf{x}^I)f(x_j^2\mathbf{x}^I)-f(x_ix_j\mathbf{x}^I)^2 \\
                        & \quad = \quad  [2e_i+I,2e_j+I]-[e_i+e_j+I,e_i+e_j+I]\in \mathfrak{Q}
\end{align*}
Hence, the relation $(1)$ holds.

\noindent $(2)$ For an element in $\Gamma_{12}$,
\begin{align*}
&Q(x_i,x_j+x_k,\mathbf{x}^I)  =  f(x_i^2\mathbf{x}^I)f((x_j+x_k)^2\mathbf{x}^I)-f(x_i(x_j+x_k)\mathbf{x}^I)^2  \\
                         &\quad =   [2e_i+I,2e_j+I]+2[2e_i+I,e_j+e_k+I]+[2e_i+I,2e_k+I] \\
                        &\quad\quad  -[e_i+e_j+I,e_i+e_j+I]-2[e_i+e_j+I,e_i+e_k+I] -[e_i+e_k+I,e_i+e_k+I] .
\end{align*}
Using the equivalence relation $(1)$ for each pair $(i,j)$ and $(i,k)$, this is equivalent to $2[2e_i+I,e_j+e_k+I]-2[e_i+e_j+I,e_i+e_k+I]$. Since ${\rm char}(\Bbbk) \neq 2$, it follows the above relation $(2)$.

\noindent $(3)$  For an element in $\Gamma_{22}$,
\begin{align*}
& Q(x_i+x_j,x_k+x_l,\mathbf{x}^I)  =     f((x_i+x_j)^2\mathbf{x}^I)f((x_k+x_l)^2\mathbf{x}^I)-f((x_i+x_j)(x_k+x_l)\mathbf{x}^I)^2 \\
&=[2e_i+I,2e_k+I]-[e_i+e_k+I,e_i+e_k+I]+[2e_i+I,2e_l+I]\\
&\quad -[e_i+e_l+I,e_i+e_l+I]+[2e_j+I,2e_k+I]-[e_j+e_k+I,e_j+e_k+I]\\
&\quad +[2e_j+I,2e_l+I]-[e_j+e_l+I,e_j+e_l+I]+2[2e_i+I,e_k+e_l+I]\\
&\quad -2[e_i+e_k+I,e_i+e_l+I]+2[2e_j+I,e_k+e_l+I]-2[e_j+e_k+I,e_j+e_l+I]\\
&\quad +2[e_i+e_j+I,2e_k+I]-2[e_i+e_k+I,e_j+e_k+I]+2[e_i+e_j+I,2e_l+I]\\
&\quad -2[e_i+e_l+I,e_j+e_l+I]+4[e_i+e_j+I,e_k+e_l+I]-2[e_i+e_k+I,e_j+e_l+I]\\
&\quad -2[e_i+e_l+I,e_j+e_k+I]~.
\end{align*}
We can erase the above $8$ equations using the relation $(1)$ and $(2)$.
Then the only remaining equivalent part is
\begin{equation}\label{eq31}
4[e_i+e_j+I,e_k+e_l+I]-2[e_i+e_k+I,e_j+e_l+I]-2[e_i+e_l+I,e_j+e_k+I].
\end{equation}
By exchanging $j$ and $k$, we also obtain the relation
\begin{equation}\label{eq32}
Q(x_i+x_k,x_j+x_l,\mathbf{x}^I)
\sim4[e_i+e_k+I,e_j+e_l+I]-2[e_i+e_j+I,e_k+e_l+I]-2[e_i+e_l+I,e_j+e_k+I].
\end{equation}
By subtracting (\ref{eq32}) from (\ref{eq31}) and dividing it by $6$, it can be shown that
\begin{equation*}
[e_i+e_j+I,e_k+e_l+I]\sim [e_i+e_l+I,e_j+e_k+I].
\end{equation*}
To this aim, we require the assumption that ${\mbox char}(\Bbbk) \neq 2,3$.

\noindent $(4)$ For an element in $\Gamma_{11}$,
\begin{align*}
&Q(x_i,x_j,\mathbf{x}^I+\mathbf{x}^J) = f(x_i^2(\mathbf{x}^I+\mathbf{x}^J))f(x_j^2(\mathbf{x}^I+\mathbf{x}^J))-f(x_ix_j(\mathbf{x}^I+\mathbf{x}^J))^2\\
&\quad=  [2e_i+I,2e_j+I]+[2e_i+J,2e_j+J]+[2e_i+I,2e_j+J]+[2e_i+J,2e_j+I]\\
& \quad  -[e_i+e_j+I,e_i+e_j+I]-[e_i+e_j+J,e_i+e_j+J]-2[e_i+e_j+I,e_i+e_j+J]~.
\end{align*}
Using the relation $(1)$ for $I$ and $J$, one can show that the above equation is equivalent to
\begin{equation*}
[2e_i+I,2e_j+J]+[2e_j+I,2e_i+J]- 2[e_i+e_j+I,e_i+e_j+J].
\end{equation*}
Hence the relation $(4)$ holds.

\noindent $(5)$ For an element in $\Gamma_{12}$,
\begin{align*}
&Q(x_i,x_j+x_k,\mathbf{x}^I+\mathbf{x}^J) \\
&= f(x_i^2(\mathbf{x}^I+\mathbf{x}^J))f((x_j+x_k)^2(\mathbf{x}^I+\mathbf{x}^J))-f(x_i(x_j+x_k)(\mathbf{x}^I+\mathbf{x}^J))^2\\
&= [2e_i+I,2e_j+I]-[e_i+e_j+I,e_i+e_j+I]\\
&+[2e_i+I,2e_k+I]-[e_i+e_k+I,e_i+e_k+I]\\
&+[2e_i+J,2e_j+J]-[e_i+e_j+J,e_i+e_j+J]\\
&+[2e_i+J,2e_k+J]-[e_i+e_k+J,e_i+e_k+J]\quad\textrm{(first 4 lines $\sim$ $0$ by (1))}\\
&+2[2e_i+I,e_j+e_k+I]-2[e_i+e_j+I,e_i+e_k+I]\\
&+2[2e_i+J,e_j+e_k+J]-2[e_i+e_j+J,e_i+e_k+J]\quad\textrm{(next two lines $\sim$ $0$ by (2))}\\
&+[2e_i+I,2e_j+J]+[2e_i+J,2e_j+I]-2[e_i+e_j+I,e_i+e_j+J]\\
&+[2e_i+I,2e_k+J]+[2e_i+J,2e_k+I]-2[e_i+e_k+I,e_i+e_k+J]\quad\textrm{(and two lines $\sim$ $0$ by (4))}\\
&+2[2e_i+I,e_j+e_k+J]+2[2e_i+J,e_j+e_k+I]-2[e_i+e_j+I,e_i+e_k+J]\\
&-2[e_i+e_k+I,e_i+e_j+J] \quad\textrm{(the only remaining part)}
\end{align*}
Thus, we have
\begin{equation*}
[2e_i+I,e_j+e_k+J]+[e_j+e_k+I,2e_i+J]-[e_i+e_j+I,e_i+e_k+J]-[e_i+e_k+I,e_i+e_j+J]\in \mathfrak{Q}~.
\end{equation*}
Hence the relation $(5)$ holds.

\noindent $(6)$ For an element in $\Gamma_{22}$,
\begin{footnotesize}
\begin{align*}
& Q(x_i+x_j,x_k+x_l,\mathbf{x}^I+\mathbf{x}^J)\\
& =  f((x_i+x_j)^2(\mathbf{x}^I+\mathbf{x}^J))f((x_k+x_l)^2(\mathbf{x}^I+\mathbf{x}^J))  -f((x_i+x_j)(x_k+x_l)(\mathbf{x}^I+\mathbf{x}^J))^2\\
&= [2e_i+I,2e_k+I]-[e_i+e_k+I,e_i+e_k+I]+[2e_i+I,2e_l+I]-[e_i+e_l+I,e_i+e_l+I]\\
&+[2e_j+I,2e_k+I]-[e_j+e_k+I,e_j+e_k+I]+[2e_j+I,2e_l+I]-[e_j+e_l+I,e_j+e_l+I]\\
&+[2e_i+J,2e_k+J]-[e_i+e_k+J,e_i+e_k+J]+[2e_i+J,2e_l+J]-[e_i+e_l+J,e_i+e_l+J]\\
&+[2e_j+J,2e_k+J]-[e_j+e_k+J,e_j+e_k+J]+[2e_j+J,2e_l+J]-[e_j+e_l+J,e_j+e_l+J]\\
&\quad\quad\textrm{(first 4 lines equivalent to $0$ by (1))}\\
&+2[2e_i+I,e_k+e_l+I]-2[e_i+e_k+I,e_i+e_l+I]+2[2e_i+J,e_k+e_l+J]-2[e_i+e_k+J,e_i+e_l+J]\\
&+2[2e_j+I,e_k+e_l+I]-2[e_j+e_k+I,e_j+e_l+I]+2[2e_j+J,e_k+e_l+J]-2[e_j+e_k+J,e_j+e_l+J]\\
&+2[e_i+e_j+I,2e_k+I]-2[e_i+e_k+I,e_j+e_k+I]+2[e_i+e_j+J,2e_k+J]-2[e_i+e_k+J,e_j+e_k+J]\\
&+2[e_i+e_j+I,2e_l+I]-2[e_i+e_l+I,e_j+e_l+I]+2[e_i+e_j+J,2e_l+J]-2[e_i+e_l+J,e_j+e_l+J]\\
&\quad\quad\textrm{(next 4 lines equivalent to $0$ by (2))}\\
&+[2e_i+I,2e_k+J]+[2e_i+J,2e_k+I]-2[e_i+e_k+I,e_i+e_k+J]\\
&+[2e_i+I,2e_l+J]+[2e_i+J,2e_l+I]-2[e_i+e_l+I,e_i+e_l+J]\\
&+[2e_j+I,2e_k+J]+[2e_j+J,2e_k+I]-2[e_j+e_k+I,e_j+e_k+J]\\
&+[2e_j+I,2e_l+J]+[2e_j+J,2e_l+I]-2[e_j+e_l+I,e_j+e_l+J]\\
&\quad\quad\textrm{(and 4 lines equivalent to $0$ by (4))}
\end{align*}
\begin{align*}
&+2[2e_i+I,e_k+e_l+J]+2[2e_i+J,e_k+e_l+I]-2[e_i+e_k+I,e_i+e_l+J]-2[e_i+e_l+I,e_i+e_k+J]\\
&+2[2e_j+I,e_k+e_l+J]+2[2e_j+J,e_k+e_l+I]-2[e_j+e_k+I,e_j+e_l+J]-2[e_j+e_l+I,e_j+e_k+J]\\
&+2[e_i+e_j+I,2e_k+J]+2[e_i+e_j+J,2e_k+I]-2[e_i+e_k+I,e_j+e_k+J]-2[e_j+e_k+I,e_i+e_k+J]\\
&+2[e_i+e_j+I,2e_l+J]+2[e_i+e_j+J,2e_l+I]-2[e_i+e_l+I,e_j+e_l+J]-2[e_j+e_l+I,e_i+e_l+J]
\\
&\quad\quad\textrm{(and 4 lines equivalent to $0$ by (5))}\\
&+4[e_i+e_j+I,e_k+e_l+I]-2[e_i+e_k+I,e_j+e_l+I]-2[e_i+e_l+I,e_j+e_k+I]\\
&+4[e_i+e_j+J,e_k+e_l+J]-2[e_i+e_k+J,e_j+e_l+J]-2[e_i+e_l+J,e_j+e_k+J]\\
&\quad\quad\textrm{(and 2 lines equivalent to $0$ by (3))}\\
&+4[e_i+e_j+I,e_k+e_l+J]+4[e_k+e_l+I,e_i+e_j+J]-2[e_i+e_k+I,e_j+e_l+J]\\
&-2[e_j+e_l+I,e_i+e_k+J]-2[e_i+e_l+I,e_j+e_k+J]-2[e_j+e_k+I,e_i+e_l+J]\quad.\\
&\quad\quad\textrm{(the only remaining part)}
\end{align*}
\end{footnotesize}
Hence we see that $Q(x_i+x_j,x_k+x_l,\mathbf{x}^I+\mathbf{x}^J)$ is equivalent to
\begin{equation}\label{eq61}
\begin{CD}
4[e_i+e_j+I,e_k+e_l+J]+4[e_k+e_l+I,e_i+e_j+J] -2[e_i+e_k+I,e_j+e_l+J] \\
-2[e_j+e_l+I,e_i+e_k+J]-2[e_i+e_l+I,e_j+e_k+J]-2[e_j+e_k+I,e_i+e_l+J].
\end{CD}
\end{equation}
By exchanging $j$ and $k$, we also obtain the relation
\begin{equation}\label{eq62}
\begin{CD}
Q(x_i+x_k,x_j+x_l,\mathbf{x}^I+\mathbf{x}^J)\sim\\
4[e_i+e_k+I,e_j+e_l+J]+4[e_j+e_l+I,e_i+e_k+J]-2[e_i+e_j+I,e_k+e_l+J] \\
 -2[e_k+e_l+I,e_i+e_j+J]-2[e_i+e_l+I,e_j+e_k+J]-2[e_j+e_k+I,e_i+e_l+J].
\end{CD}
\end{equation}
By subtracting (\ref{eq62}) from (\ref{eq61}) and dividing it by $6$, it can be shown that
$$[e_i+e_j+I,e_k+e_l+J]+ [e_k+e_l+I,e_i+e_j+J]\sim [e_i+e_k+I,e_j+e_l+J]+[e_j+e_l+I,e_i+e_k+J].$$
Here, we require again the assumption that ${\rm char}(\Bbbk) \neq 2,3$.
\end{proof}

\begin{lemma}\label{lem:induction}
Let $I,J,K, L \in A(n,d)$ be such that $I+J=K+L$. Under the assumption $(\dagger)$ (just after Theorem \ref{thm:finite generating set of rank 3 quadrics}), if
\begin{equation*}
\Supp(I)\cap \Supp(J)\cap \Supp(K) \cap \Supp(L)~\mbox{or}~\Supp(I)^c\cap \Supp(J)^c\cap \Supp(K)^c \cap \Supp(L)^c
\end{equation*}
is nonempty, then $[I,J]\sim [K,L]$.
\end{lemma}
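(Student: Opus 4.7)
The plan is to split into the two cases corresponding to the disjunction in the hypothesis, and in each case pull $[I,J] - [K,L]$ back along one of the maps $\iota_k$ or $\delta_k$ of Notation and Remarks \ref{not and rmk:Veronese} to a quadric vanishing on a lower Veronese variety, where the inductive hypothesis $(\dagger)$ can be applied directly.

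Suppose first that $k \in \Supp(I) \cap \Supp(J) \cap \Supp(K) \cap \Supp(L)$. Then each of $I,J,K,L$ has positive $k$-th coordinate, so I would write them as $e_k + I'$, $e_k + J'$, $e_k + K'$, $e_k + L'$ with $I', J', K', L' \in A(n, d-1)$; the relation $I+J = K+L$ translates to $I' + J' = K' + L'$, so $[I',J'] - [K',L']$ is an element of $I(V_{n, d-1})_2$. By the inductive hypothesis applied to $(\P^n, \mathcal{O}_{\P^n}(d-1))$, this difference lies in the span of $\Gamma(\mathcal{O}_{\P^n}(1), \mathcal{O}_{\P^n}(d-3))$. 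Applying the linear map $\delta_k$ and invoking Notation and Remarks \ref{not and rmk:Veronese}.(4) (which records that $\delta_k$ sends $\Gamma(\mathcal{O}_{\P^n}(1), \mathcal{O}_{\P^n}(d-3))$ into $\Gamma(\mathcal{O}_{\P^n}(1), \mathcal{O}_{\P^n}(d-2))$), the image $[I,J] - [K,L] = \delta_k\bigl([I',J'] - [K',L']\bigr)$ lies in $\mathfrak{Q}$, i.e. $[I,J] \sim [K,L]$.

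In the second case, some $k$ satisfies $I_k = J_k = K_k = L_k = 0$, so all four indices come from $A(n-1, d)$ via the inclusion underlying $\iota_k$. Then $[I,J] - [K,L]$ is $\iota_k$ applied to an element of $I(V_{n-1, d})_2$. The inductive hypothesis applied to $(\P^{n-1}, \mathcal{O}_{\P^{n-1}}(d))$ writes that element as a linear combination of quadrics in $\Gamma(\mathcal{O}_{\P^{n-1}}(1), \mathcal{O}_{\P^{n-1}}(d-2))$, and Notation and Remarks \ref{not and rmk:Veronese}.(3) guarantees that $\iota_k$ carries this generating set into $\Gamma(\mathcal{O}_{\P^n}(1), \mathcal{O}_{\P^n}(d-2))$, so once again $[I,J] \sim [K,L]$.

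No real obstacle is expected: the lemma is essentially a bookkeeping reduction that exploits the compatibility of $\iota_k$ and $\delta_k$ with the generating set $\Gamma$, and both cases are immediate from the induction once this compatibility is used. The serious work for Theorem \ref{thm:finite generating set of rank 3 quadrics} will come later, when one has to treat the remaining configurations $I+J = K+L$ in which no common coordinate, neither uniformly positive nor uniformly zero, exists across $I,J,K,L$; those cases will presumably be handled by the direct manipulations supplied by Lemma \ref{lem:gamma} together with the $\PGL_n(\Bbbk)$-action on $I(V_{n,d})_2$.
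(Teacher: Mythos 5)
Your proof is correct and follows essentially the same route as the paper: the common-zero-coordinate case is handled by pulling back along $\iota_k$ to $I(V_{n-1,d})_2$ (exactly as in the proof of Proposition \ref{prop:Veronese lower d}, to which the paper simply refers), and the common-support case by stripping off $e_k$ and pushing forward along $\delta_k$, using the compatibility of both maps with $\Gamma$ recorded in Notation and Remarks \ref{not and rmk:Veronese}.(3)--(4). Your reading of the double-induction hypothesis $(\dagger)$ as covering $(\P^{n-1},\mathcal{O}(d))$ and $(\P^{n},\mathcal{O}(d-1))$ matches the paper's own (slightly loosely stated) usage, so there is nothing to add.
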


\begin{proof}
The case where $\Supp(I)^c\cap \Supp(J)^c\cap \Supp(K)^c \cap
\Supp(L)^c$ is nonempty is already dealt with in the proof of
Proposition \ref{prop:Veronese lower d}, by the induction on $n$.

Next, suppose that $\Supp(I)\cap \Supp(J)\cap \Supp(K) \cap \Supp(L)$ has an element, say $i$. Note that the four elements
\begin{equation*}
I'=I-e_i,\quad J'=J-e_i,\quad K'=K-e_i \quad \mbox{and} \quad L'=L-e_i
\end{equation*}
of $A(n,d-1)$ satisfies the condition $I'+J'=K'+L'$. By the
induction hypothesis on $d$, it holds that $(I',J')\sim (K',L')$. Now, by
using the map $\delta_i:I(V_{n,d-1})_2\rightarrow I(V_{n,d})_2$ induced
from multiplication by $x_i$, we can conclude that $[I,J]\sim
[K,L]$.
\end{proof}

Using the above two lemmas, we can prove the following lemmas.

\begin{lemma}\label{lem:311case}
Suppose that $I_0\geq 3$ and $J_1\geq 1, J_2\geq 1$. Then
\begin{equation*}
[I,J]\sim [-2e_0+e_1+e_2+I,2e_0-e_1-e_2+J].
\end{equation*}
\end{lemma}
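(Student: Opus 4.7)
The plan is to apply Lemma \ref{lem:gamma}(5) once, and then to collapse the resulting right-hand side by a double invocation of Lemma \ref{lem:induction}. First I would set $\tilde I := I - 2e_0$ and $\tilde J := J - e_1 - e_2$, which by the hypotheses $I_0 \geq 3$ and $J_1, J_2 \geq 1$ lie in $A(n,d-2)$. A point to record for later use is that $\tilde I_0 = I_0 - 2 \geq 1$. In this notation, $[I, J] = [2e_0 + \tilde I, e_1 + e_2 + \tilde J]$ and the target monomial $[-2e_0 + e_1 + e_2 + I, 2e_0 - e_1 - e_2 + J]$, which I shall denote by $[I', J']$, rewrites as $[e_1 + e_2 + \tilde I, 2e_0 + \tilde J]$.

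Next I would invoke Lemma \ref{lem:gamma}(5) with indices $(i,j,k) = (0,1,2)$ and bases $(\tilde I, \tilde J)$. This immediately produces the master identity
\begin{equation*}
[I, J] + [I', J'] ~\sim~ C_1 + C_2,
\end{equation*}
where $C_1 := [e_0 + e_1 + \tilde I,\, e_0 + e_2 + \tilde J]$ and $C_2 := [e_0 + e_2 + \tilde I,\, e_0 + e_1 + \tilde J]$.

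The crucial observation is then that the index $0$ lies in the support of every one of the six factors appearing in $[I', J']$, $C_1$, and $C_2$. Indeed, the $0$-coordinate of $e_1 + e_2 + \tilde I$ equals $\tilde I_0 \geq 1$ (this is precisely where the strict condition $I_0 \geq 3$ is used), that of $2e_0 + \tilde J$ is at least $2$, while each of the four factors building $C_1$ and $C_2$ carries an explicit $e_0$ summand. Hence Lemma \ref{lem:induction}, invoked with common index $k = 0$, gives the two equivalences $[I', J'] \sim C_1$ and $C_1 \sim C_2$. Plugging these into the master identity yields $[I, J] + [I', J'] \sim 2[I', J']$, and cancellation produces the desired $[I, J] \sim [I', J']$.

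I do not anticipate a serious obstacle: the argument is essentially a single application of identity (5), together with a uniform ``$0$ is everywhere'' observation that sidesteps any case analysis on the remaining entries of $I$ or on whether $J_0$ is zero. The one delicate point is to recognise that the strict hypothesis $I_0 \geq 3$ (rather than the weaker $I_0 \geq 2$ needed for the decomposition $I = 2e_0 + \tilde I$) is genuinely essential, since it is precisely what guarantees $\tilde I_0 \geq 1$ and puts $0$ into the support of the factor $e_1 + e_2 + \tilde I$ of $[I', J']$, making Lemma \ref{lem:induction} directly applicable to the pair $\{[I', J'], C_1\}$.
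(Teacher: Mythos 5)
Your proposal is correct and follows essentially the same route as the paper: one application of Lemma \ref{lem:gamma}(5) with the decomposition $I = 2e_0 + \tilde I$, $J = e_1+e_2+\tilde J$, followed by Lemma \ref{lem:induction} with common support index $0$ to identify the three monomials on the right (the paper shows $[I',J']\sim C_1$ and $[I',J']\sim C_2$ rather than $[I',J']\sim C_1\sim C_2$, an immaterial difference), and then cancellation. You also correctly isolate the role of the hypothesis $I_0\geq 3$, which is exactly where the paper uses it.
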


\begin{proof}
By the Lemma \ref{lem:gamma} (5), we have

\noindent $[2e_0+(I-2e_0),e_1+e_2+(J-e_1-e_2)]+[e_1+e_2+(I-2e_0),2e_0+(J-e_1-e_2)]   \sim$
\begin{equation*}
[e_0+e_1+(I-2e_0),e_0+e_2+(J-e_1-e_2)]+[e_0+e_2+(I-2e_0),e_0+e_1+(J-e_1-e_2)].
\end{equation*}
Since $I_0-2e_0>0$, it holds by Lemma \ref{lem:induction} that
\begin{equation*}
[e_1+e_2+(I-2e_0),2e_0+(J-e_1-e_2)]\sim [e_0+e_1+(I-2e_0),e_0+e_2+(J-e_1-e_2)]
\end{equation*}
and
\begin{equation*}
[e_1+e_2+(I-2e_0),2e_0+(J-e_1-e_2)]\sim [e_0+e_2+(I-2e_0),e_0+e_1+(J-e_1-e_2)].
\end{equation*}
In consequence,
\begin{equation*}
[I,J]=[2e_0+(I-2e_0),e_1+e_2+(J-e_1-e_2)]\sim [e_1+e_2+(I-2e_0),2e_0+(J-e_1-e_2)].
\end{equation*}
Here we use the property that if $a+b\sim c+d, b\sim c$, and $b\sim d$, then $a\sim b$.
\end{proof}

\begin{lemma}\label{lem:1221case}
If $I_0,I_1,J_2,J_3\geq 1$ and $I_1$ or $J_2$ is $\geq 2$, then
\begin{equation*}
[I,J] \sim [-e_0+e_3+I,e_0-e_3+J].
\end{equation*}
\end{lemma}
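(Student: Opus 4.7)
My plan is to reduce the statement to a single application of the ``double swap'' identity in Lemma \ref{lem:gamma}(6), and then absorb one of the two auxiliary monomials by invoking Lemma \ref{lem:induction} via the induction hypothesis $(\dagger)$. The choice of indices is forced: since we want to exchange $e_0$ (which we know sits in $I$) with $e_3$ (which we know sits in $J$), and since Lemma \ref{lem:gamma}(6) is the only identity producing exactly such a one-for-one swap, the natural quadruple is $(i,j,k,l)=(0,1,2,3)$. The assumptions $I_0,I_1\ge 1$ and $J_2,J_3\ge 1$ are exactly what is needed for the substitution $I'=I-e_0-e_1\in A(n,d-2)$ and $J'=J-e_2-e_3\in A(n,d-2)$ to be legal.

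With these choices, Lemma \ref{lem:gamma}(6) produces the relation
\begin{equation*}
[I,J]+[A,B]\ \sim\ [C,D]+[I-e_0+e_3,\ J+e_0-e_3],
\end{equation*}
where
\begin{equation*}
A=I-e_0-e_1+e_2+e_3,\quad B=J+e_0+e_1-e_2-e_3,\quad C=I-e_1+e_2,\quad D=J+e_1-e_2.
\end{equation*}
Thus it is enough to establish the single equivalence $[A,B]\sim[C,D]$, which is legitimate because $A+B=I+J=C+D$.

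To prove $[A,B]\sim[C,D]$, I will apply Lemma \ref{lem:induction}, for which I need a common index in $\Supp(A)\cap\Supp(B)\cap\Supp(C)\cap\Supp(D)$. Here is where the hypothesis ``$I_1$ or $J_2$ is $\geq 2$'' enters in a sharp way. If $I_1\ge 2$, then position $1$ is in the common support, because
\begin{equation*}
A_1=I_1-1\ge 1,\quad B_1=J_1+1\ge 1,\quad C_1=I_1-1\ge 1,\quad D_1=J_1+1\ge 1.
\end{equation*}
Symmetrically, if $J_2\ge 2$ then position $2$ lies in the common support. Either way Lemma \ref{lem:induction} yields $[A,B]\sim[C,D]$, and substituting back into the relation from Lemma \ref{lem:gamma}(6) gives $[I,J]\sim[I-e_0+e_3,J+e_0-e_3]$.

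The only mildly delicate point is the bookkeeping that confirms the indices produced by Lemma \ref{lem:gamma}(6) under the substitution $(I',J')=(I-e_0-e_1,J-e_2-e_3)$ really match the monomials $[I,J]$, $[A,B]$, $[C,D]$, and $[I-e_0+e_3,J+e_0-e_3]$; this is a straightforward verification term by term. I do not anticipate any obstacle beyond this check, and the extra hypothesis in the statement is seen to be exactly the minimum required to make one of the two ``safety indices'' $\{1,2\}$ belong to all four supports, so that the inductive removal via Lemma \ref{lem:induction} succeeds.
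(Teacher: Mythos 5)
Your proposal is correct and is essentially identical to the paper's own proof: the same application of Lemma \ref{lem:gamma}(6) with the quadruple $(0,1,2,3)$ and base monomials $I-e_0-e_1$, $J-e_2-e_3$ (your $[A,B]$ is the paper's $[I',J']$), followed by the same use of Lemma \ref{lem:induction} at the common index $1$ or $2$ supplied by the hypothesis $I_1\ge 2$ or $J_2\ge 2$. No gaps.
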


\begin{proof}
Let $I'=e_2+e_3+(I-e_0-e_1)$ and $J'=e_0+e_1+(J-e_2-e_3)$. Then it follows by Lemma \ref{lem:gamma}.(6) that
\begin{align*}
[I,J]+ [I',J'] &= [e_0+e_1+(I-e_0-e_1),e_2+e_3+(J-e_2-e_3)]\\
&  \quad +[e_2+e_3+(I-e_0-e_1),e_0+e_1+(J-e_2-e_3)] \\
& \sim  [e_0+e_2+(I-e_0-e_1),e_1+e_3+(J-e_2-e_3)]\\
&  \quad +[e_1+e_3+(I-e_0-e_1),e_0+e_2+(J-e_2-e_3)] \\
&= [-e_1+e_2+I,e_1-e_2+J]+[-e_0+e_3+I,e_0-e_3+J].
\end{align*}
Since $I_1\geq2$ or $J_2\geq 2$, it holds that $I'_1=I_1-1\geq 1$ and $J'_1=J_1+1\geq 1$, or $I'_2=I_2+1\geq 1$ and $J'_2=J_2-1\geq 1$.
Therefore we have
$$[I',J']\sim [-e_1+e_2+I,e_1-e_2+J]$$
by Lemma \ref{lem:induction}. In conclusion, it is shown that
\begin{equation*}
[I,J]\sim [-e_0+e_3+I,e_0-e_3+J]
\end{equation*}
by the above equivalence relation of $[I,J]+ [I',J']$.
\end{proof}

Recall that the automorphism group ${\rm Aut} (V_{n,d} , \P^N )$ of the Veronese variety $V_{n,d}$ in $\P^N$ is
defined as
\begin{equation*}
{\rm Aut} (V_{n,d} , \P^N ) := \{ \sigma \in \PGL_N (\kk) ~|~ \sigma (V_{n,d}
) = V_{n,d} \}.
\end{equation*}
There is a natural group action of ${\rm Aut} (V_{n,d} , \P^N )$ on
the homogeneous ideal $I(V_{n,d} )$. In particular, it acts on the
$\kk$-vector space $I(V_{n,d})_2$ and the rank is preserved under
this action. Since there is a natural isomorphism between
$\PGL_n(\kk)$ and ${\rm Aut} (V_{n,d} , \P^N )$, the group
$\PGL_n(\kk)$ acts on the homogeneous coordinate rings of $\P^n$ and
$\P^N$. Also, it acts on $I(V_{n,d})_2$ by which the rank is
preserved. Moreover, this action commutes with the $Q$-map in the
sense that for any $\sigma \in \PGL_n(\kk)$,
\begin{equation*}
\sigma (Q(s,t,h)) = Q (\sigma (s) , \sigma (t) , \sigma (h)).
\end{equation*}
By using this observation, we can prove the following

\begin{lemma}\label{lem:1100case}
If $I_0=I_1 =K_0= K_1 = 1$ and $J_0= J_1=L_0= L_1=0$, then
$[I,J]\sim [K,L]$.
\end{lemma}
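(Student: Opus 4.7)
The plan is to exploit the $\PGL_n(\Bbbk)$-action on $I(V_{n,d})_2$ via the identity $\sigma(Q(s,t,h))=Q(\sigma(s),\sigma(t),\sigma(h))$ recalled just above the statement of the lemma. That identity means $\PGL_n(\Bbbk)$ preserves the image of the $Q$-map, and hence the ideal $\mathfrak{Q}$; applying the same reasoning to $\sigma^{-1}$ gives $\sigma^\ast(\mathfrak{Q})=\mathfrak{Q}$. It therefore suffices to produce a single $\sigma$ for which $\sigma^\ast([I,J]-[K,L])$ already lies in $\mathfrak{Q}$.

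Writing $I=e_0+e_1+I'$ and $K=e_0+e_1+K'$ with $I',K'\in A(n,d-2)$ supported on $\{2,\ldots,n\}$ (so that $I'+J=K'+L$), the candidate I would take is
\begin{equation*}
\sigma\colon x_0\mapsto x_0+x_1,\quad x_1\mapsto x_0-x_1,\quad x_i\mapsto x_i\ \text{for}\ i\geq 2,
\end{equation*}
which lies in $\PGL_n(\Bbbk)$ because $\mathrm{char}\,\Bbbk\neq 2$. The key feature is $\sigma^\ast(x_0x_1)=x_0^2-x_1^2$, while $\sigma^\ast$ fixes every monomial whose support lies in $\{2,\ldots,n\}$. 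Pulling back along $\nu_d$ therefore gives
\begin{equation*}
\sigma^\ast(z_I)=z_{2e_0+I'}-z_{2e_1+I'},\quad \sigma^\ast(z_K)=z_{2e_0+K'}-z_{2e_1+K'},\quad \sigma^\ast(z_J)=z_J,\quad \sigma^\ast(z_L)=z_L,
\end{equation*}
and consequently
\begin{equation*}
\sigma^\ast\bigl([I,J]-[K,L]\bigr)=\bigl([2e_0+I',J]-[2e_0+K',L]\bigr)-\bigl([2e_1+I',J]-[2e_1+K',L]\bigr).
\end{equation*}

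Each of the two differences on the right lies in $\mathfrak{Q}$ by Lemma~\ref{lem:induction}. Indeed, in the first difference all four multi-indices $2e_0+I'$, $J$, $2e_0+K'$, $L$ have $1$st coordinate zero, so coordinate $1$ belongs to $\Supp(\cdot)^c$ of each, while the sum-matching condition $(2e_0+I')+J=(2e_0+K')+L$ is immediate from $I'+J=K'+L$; the second difference is handled symmetrically at coordinate $0$. Hence $\sigma^\ast([I,J]-[K,L])\in\mathfrak{Q}$, and the $\PGL_n(\Bbbk)$-invariance of $\mathfrak{Q}$ gives $[I,J]\sim[K,L]$. The only real idea is the choice of $\sigma$: the ``mixed'' factor $x_0x_1$ simultaneously binds coordinates $0$ and $1$ to the first argument of each quadric, and converting it into $x_0^2-x_1^2$ decouples them so that the already proved Lemma~\ref{lem:induction} can finish the job at each coordinate separately; the hypothesis $J_0=J_1=L_0=L_1=0$ is precisely what makes $\sigma^\ast(z_J)=z_J$ and $\sigma^\ast(z_L)=z_L$, without which this decoupling would break down.
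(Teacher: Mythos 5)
Your proof is correct and follows essentially the same strategy as the paper's: both exploit the $\PGL_n(\Bbbk)$-invariance of $\mathfrak{Q}$ via a coordinate change mixing $x_0$ and $x_1$, and then finish with Lemma \ref{lem:induction} applied to index quadruples that all omit a common coordinate. The paper uses the shear $x_0\mapsto x_0+x_1$ pushed forward onto the known relations $[2e_0+I',J]\sim[2e_0+K',L]$ and $[2e_1+I',J]\sim[2e_1+K',L]$ and then cancels to obtain $2[I,J]\sim 2[K,L]$, whereas your substitution $x_0\mapsto x_0+x_1,\ x_1\mapsto x_0-x_1$ pulls $[I,J]-[K,L]$ back directly into $\mathfrak{Q}$ --- a clean variant of the same idea.
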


\begin{proof}
Let $I'=(2,0,I_2 ,\ldots, I_n )$, $I''=(0,2,I_2 ,\ldots, I_n )$,
$K'=(2,0,K_2 ,\ldots, K_n )$ and $K''=(0,2,K_2 ,\ldots, K_n )$. By
Lemma \ref{lem:induction}, we have
\begin{equation*}
[I',J]\sim[K',L] \quad \mbox{and} \quad [I'',J]\sim[K'',L].
\end{equation*}
Now, consider the automorphism $\sigma$ of $\P^n$ induced from the homogeneous coordinate change
\begin{equation*}
(x_0 , x_1 , \ldots , x_n ) \mapsto (x_0 + x_1 , x_1 , \ldots , x_n
).
\end{equation*}
From this change of coordinates $x_0^2$ is sent to $(x_0+x_1)^2=x_0^2+2x_0x_1+x_1^2$, which shows $\sigma(z_{I'})=z_{I'}+2z_I+z_{I''}$.
Then it holds that
\begin{equation*}
0 \sim \sigma ([I',J]-[K',L]) = [I',J]+2[I,J]+[I'',J]
 -[K',L]-2[K,L]-[K'',L].
\end{equation*}
In consequence, it holds that $[I,J]\sim[K,L]$.
\end{proof}

Now, we are ready to give a \\

\noindent {\bf Proof of Theorem \ref{thm:finite generating set of rank 3 quadrics}.} As mentioned
in Notation and Remarks \ref{additional not and rmk:Veronese}.(5),
we need to show that $[I,J]\sim [K,L]$ for any choice of $I,J,K,L
\in A(n,d)$ with $I+J=K+L$. There are the following three cases.
\smallskip

\renewcommand{\descriptionlabel}[1]%
             {\hspace{\labelsep}\textrm{#1}}
\begin{description}
\setlength{\labelwidth}{13mm} \setlength{\labelsep}{1.5mm}
\setlength{\itemindent}{0mm}
\item[Case 1.] $\Supp(I)\cap \Supp(J) \neq \emptyset$ and $\Supp(K) \cap \Supp(L)\neq\emptyset$
\item[Case 2.]  $\Supp(I)\cap \Supp(J) = \emptyset$ and $\Supp(K) \cap \Supp(L) \neq \emptyset$
\item[Case 3.]  $\Supp(I)\cap \Supp(J)=\emptyset$ and $\Supp(K) \cap \Supp(L) =\emptyset$
\end{description}
We will deal with these 3 cases in turn.\medskip

\noindent{\bf Case 1.} In Lemma \ref{lem:induction} we've already dealt with the case $\Supp(I)\cap\Supp(J)\cap\Supp(J)\cap\Supp(K)$ is nonempty. So, without loss of generality, we can assume that
\begin{equation*}
0\in\Supp(I)\cap\Supp(J) \quad \mbox{and} \quad 1\in\Supp(K)\cap\Supp(L).
\end{equation*}
If $K_0$ and $L_0$ are both nonzero, by Lemma \ref{lem:induction}, we are done. So, one of them is zero.
Similarly, one of $I_1$ and $J_1$ is zero. So, we may assume that
\begin{equation*}
J_1 = L_0 = 0 \quad \mbox{and hence} \quad K_0=I_0+J_0 \quad \mbox{and} \quad I_1=K_1 +L_1 .
\end{equation*}
Then it is true that
\begin{equation*}
J_2 + \cdots + J_n = d-J_0 = d- K_0 + I_0 = I_0 + K_1 + \cdots + K_n > K_1
\end{equation*}
since $I_0 > 0$. Thus there exist non-negative integers $J_2 ' , \ldots , J_n '$ such that
\begin{enumerate}
\item[$(i)$] $J_i \geq J_i '$ for every $2 \leq i \leq n$, and
\item[$(ii)$] $J_2 ' + \cdots + J_n ' = K_1$.
\end{enumerate}
Now, we define two elements $M$ and $N$ in $A(n,d)$ as
\begin{equation*}
M := (I_0 , L_1 , I_2 + J_2 ' , \ldots , I_n + J_n ' ) \quad \mbox{and}
\quad N := (J_0 , K_1 , J_2 - J_2 ' , \ldots , J_n - J_n ' ).
\end{equation*}
Then it holds that
\begin{equation*}
|M| = I_0 + I_2 + \cdots + I_n + L_1 + J_2 ' + \cdots + J_n ' = d-I_1 + L_1 + K_1 = d,
\end{equation*}
\begin{equation*}
|N| = J_0 + J_2 + \cdots + J_n +  K_1 - (J_2 ' + \cdots + J_n ' ) = J_0 + J_2 + \cdots + J_n = d
\end{equation*}
and
\begin{equation*}
M+N = (I_0 + J_0 , L_1 + K_1 , I_2 + J_2 , \ldots , I_n + J_n )   = I+J = K+L.
\end{equation*}
Therefore $[I,J]-[M,N]$ and $[K,L]-[M,N]$ are contained in $I(V_{n,d})$.
Also, $[I,J] \sim [M,N]$ (resp. $[K,L] \sim [M,N]$) since $I_0$, $J_0$, $M_0$ and $N_0$
(resp. $K_1$, $L_1$, $M_1$ and $N_1$) are nonzero (cf. Lemma \ref{lem:induction}). In consequence, it holds that $[I,J] \sim [K,L]$.\\

\noindent{\bf Case 2.} In this case, without loss of generality, we may assume that
$K_0\geq 1, L_0\geq 1$ and $I_0=K_0+L_0, J_0=0$. Then $I_0\geq 2$.
\smallskip

\begin{enumerate}
\item[] {\bf Case 2-1.} Suppose that $|Supp(I)|=1$. Since $I_0$ is already nonzero, $I_0=d \geq 3$ and $I_i=0$ for all $1\leq i\leq n$.
    \begin{enumerate}
    \item[] {\bf Case 2-1-1.} If $J_i =0$ for some $i\neq 0$, then $I_i=J_i=0$ and hence we are done by Lemma \ref{lem:induction}.
    \item[] {\bf Case 2-1-2.} Suppose that $J_i \neq 0$ for all $1\leq i\leq n$. Since $n \geq 2$, we have
$J_1\geq 1$ and $J_2\geq 1$. By Lemma \ref{lem:311case}, it holds that
\begin{equation*}
[I,J]\sim [I',J']=[-2e_0+e_1+e_2+I,2e_0-e_1-e_2+J].
\end{equation*}
Since $I'_0=I_0-2e_0 \geq 1$, $J'_0=J_0+2e_0 \geq 1$, $K_0 \geq 1$ and $L_0\geq1$,
it follows by Lemma \ref{lem:induction} that $[I',J']\sim [K,L].$ Therefore it is true that $[I,J]\sim [K,L]$.
    \end{enumerate}
\item[] {\bf Case 2-2.} Suppose that $| \Supp (I)| \geq 2$. Without loss of generality, we may assume that $I_1 \geq 1$ and $J_1 = 0$.
     \begin{enumerate}
    \item[] {\bf Case 2-2-1.} Suppose that $|\Supp(J)|=1$. Then we may assume that $J_2=d\geq 3$.
By Lemma \ref{lem:311case} with indices $I, J$ permuted,  it holds that
$$[I,J]\sim[-e_0-e_1+2e_2+I,e_0+e_1-2e_2+J].$$
Since $I_0-1\geq1$, $J_0+1\geq 1$, $K_0\geq 1$ and $L_0\geq 1$, it follows by Lemma \ref{lem:induction} that $[I,J]\sim[K,L]$.
    \item[] {\bf Case 2-2-2.} Suppose that $|\Supp(J)|\geq 2$. Then we may assume that $J_2\geq 1$ and $J_3\geq 1$.
    \begin{enumerate}
    \item[] {\bf Case 2-2-2-1.} Suppose that $I_1\geq 2$ or $J_2\geq 2$. By Lemma \ref{lem:1221case}, it holds that $[I,J]\sim[-e_0+e_3+I,e_0-e_3+J]$. Since $I_0-1\geq 1, J_0+1\geq1, K_0\geq 1$ and $L_0\geq1$, it follows by Lemma \ref{lem:induction} that $[I,J]\sim [K,L]$.
    \item[] {\bf Case 2-2-2-2.} Now, the only remaining case is where all nonzero entries of $I$ and $J$ other than $I_0$ are equal to $1$. After reordering, we can obtain the following form, where $\1_a$ and $\0_b$ mean respectively the list of $a$ $1$'s and $b$ $0$'s.
$$I=(I_0,\1_s,\1_t,\0_u,\0_v),~ J=(0,\0_s,\0_t,\1_u,\1_v)$$
$$K=(K_0,\1_s,\0_t,\1_u,\0_v), ~ L=(L_0,\0_s,\1_t,\0_u,\1_v)$$
\begin{enumerate}
\item[] {\bf Case 2-2-2-2-1.} If one of $s,t,u$ and $v$ is greater than or equal to $2$, then we can apply Lemma \ref{lem:1100case} to show that $[I,J]\sim [K,L]$.
\item[] {\bf Case 2-2-2-2-2.} If $s,t,u,v\leq 1$, then $d=|J|=u+v\leq 2$. So, it reduces to the case when $d\leq 2$ and it is contradict to the assumption.
\end{enumerate}
    \end{enumerate}
\end{enumerate}
\end{enumerate}

\noindent{\bf Case 3.} For the third case, by reordering the indices, we can represent $I,J,K,L$ by the tuples $S,T,U,V$ with nonzero entries with length $s,t,u,v$, respectively, as below:
\begin{equation}\label{case3_format}
I=(S,T,\0_u,\0_v),~ J=(\0_s,\0_t,U,V),~ K=(S,\0_t,U,\0_v),~ L=(\0_s,T,\0_u,V)
\end{equation}
where the maximum entry of each $S,T,U$ and $V$ is moved to the first place in each $S,T,U,V$ (namely $S_0, T_0, U_0$ and $V_0$). Note that we can freely exchange roles of $S,T,U,V$ in this format (\ref{case3_format}) as permuting indices.

\begin{enumerate}
\item[] {\bf Case 3-1.} Suppose that one of $s,t,u,v$ is zero. Without loss of generality, we may assume that $s=0$. Then $d=|I|=|T|$ and $d=|L|=|T|+|V|=d+|V|$. Hence $|V|=0$. It means that $I=(T,\0_u)=L$ and $J=(\0_t,U)=K$. This is a trivial case.
\item[] {\bf Case 3-2.} Suppose that $s,t,u,v\geq 1$. First, note that (after further re-indexing, if needed) we can write the relation $[I,J]\sim[K,L]$ as
\begin{align}\label{repres_IJKL}
\displayindent0pt
\displaywidth\textwidth
&[(S_0,T_0,0,0,\0_{n-3})+I',(0,0,U_0,V_0,\0_{n-3})+J']\nonumber\\
&\quad\sim[(S_0,0,U_0,0,\0_{n-3})+K',(0,T_0,0,V_0,\0_{n-3})+L']
\end{align}
where $I',J',K'$ and $L'$ are the remaining parts of $I,J,K$ and $L$ whose first 4 entries are all zeros. 
\begin{enumerate}
\item[] {\bf Case 3-2-1.} Now, let us treat the case of at least two of $S_0,T_0,U_0$ and $V_0$ being greater than or equal to $2$. As exchanging roles of $S,T,U,V$, we may assume that $S_0\geq 2, T_0\geq2$ or $S_0\geq 2, U_0\geq2$ in (\ref{case3_format}). For $S_0\geq 2, T_0\geq2$, by Lemma \ref{lem:1221case}, first we have
\begin{align*}\displayindent0pt
\displaywidth\textwidth
&[(S_0,T_0,0,0,\0_{n-3})+I',(0,0,U_0,V_0,\0_{n-3})+J']\\&\quad\sim [(S_0-1,T_0,0,1,\0_{n-3})+I',(1,0,U_0,V_0-1,\0_{n-3})+J'].
\end{align*}
And, again by Lemma \ref{lem:1221case} with indices permuted as $(0,1,2,3)\mapsto(0,2,1,3)$ we obtain 
\begin{align*}\displayindent0pt
\displaywidth\textwidth
&[(S_0,0,U_0,0,\0_{n-3})+K',(0,T_0,0,V_0,\0_{n-3})+L']\\&\quad\sim [(S_0-1,0,U_0,1,\0_{n-3})+K',(1,T_0,0,V_0-1,\0_{n-3})+L'].
\end{align*}
Since $S_0-1\geq1$, we have
\begin{align*}
&[(S_0-1,T_0,0,1,\0_{n-3})+I',(1,0,U_0,V_0-1,\0_{n-3})+J']\\&\quad\sim[(S_0-1,0,U_0,1,\0_{n-3})+K',(1,T_0,0,V_0-1,\0_{n-3})+L']
\end{align*}
by Lemma \ref{lem:induction} since the first entry belongs to the common support. From the expression (\ref{repres_IJKL}), using equivalence relation, we can deduce that $[I,J]\sim[K,L]$. For the other case $S_0\geq 2, U_0\geq2$, similarly we can apply Lemma \ref{lem:1221case} with permuted indices and Lemma  \ref{lem:induction} to obtain the same result.
\item[] {\bf Case 3-2-2.} Suppose that at most one of $S_0,T_0,U_0$ and $V_0$ is greater than or equal to $2$. Without loss of generality, we may assume that $T_0, U_0,V_0\leq 1$. Then, since the maximum entry is less than or equal to $1$, all the entries of $T,U,V$ are $1$'s in (\ref{case3_format}).
\begin{enumerate}
\item[] {\bf Case 3-2-2-1.} If $t,u$ or $v$ is greater than or equal to $2$ (i.e. at least two consecutive indices corresponding to $0$'s and $1$'s), then by re-indexing such indices to the first places, we can get that $[I,J]\sim[K,L]$ by Lemma \ref{lem:1100case}.
    \item[] {\bf Case 3-2-2-2.} When $t$, $u$ and $v$ are all less than or equal to $1$, we have $d=|J|=u+v\leq 2$. This contradicts to our assumption $n\ge2,~d\ge3$.
\end{enumerate}
\end{enumerate}
\end{enumerate}
This completes the proof. \qed

\section{Property $\QR(3)$ of arbitrary projective schemes}\label{sect5}
\noindent Our purpose in this section is to prove Theorem
\ref{thm:main1} about the asymptotic behavior of the rank of
quadratic generators of the Veronese re-embedding and to provide its
various applications.\\

\noindent {\bf Theorem \ref{thm:main1}.}
Suppose that ${\rm char}~\Bbbk \neq 2,3$ and let $L$ be a very ample
line bundle on a projective scheme $X$ defining the linearly normal
embedding
\begin{equation*}
X \subset \P H^0 (X,L).
\end{equation*}
If $m$ is an integer such that $X$ is $j$-normal for all $j \geq m$ and $I(X)$ is generated by forms of
degree $\leq m$, then $(X,L^d )$ satisfies property $\QR (3)$ for all $d
\geq m$.\\

Recall that $X$ is $j$-normal if the natural map $\textit{Sym}^j H^0(X,L) \rightarrow H^0(X,L^j)$ is surjective.\\

\noindent {\bf Proof of Theorem \ref{thm:main1}.} Let $X_d \subset
\P^{r(d)} := \P H^0 (X,L^d )$ be the linearly normal embedding of
$X$ by the complete linear series $|L^d|$. Since $X \subset \P^n :=
\P H^0 (X,L)$ is $d$-normal and $\P^{N(n,d)} = \P\textit{Sym}^d H^0(X,L)$, we can regard $\P^{r(d)}$ as a subspace of $\P^{N(n,d)}$ by the surjective map $\textit{Sym}^d H^0(X,L) \rightarrow H^0(X,L^d)$. Then $X_d$, the embedding of $X$ by $|L^d|$, is precisely equal to the image $\nu_d (X)$ where $\nu_d : \P^n \rightarrow \P^{N(n,d)}$ denotes the $d$th
Veronese embedding of $\P^n$ (cf. \cite[Exercise II.5.13]{H}). We
first show that $X_d$ is ideal-theoretically the intersection of
$V_{n,d} =\nu_d (\P^n )$ and $\P^{r(d)}$ in $\P^{N(n,d)}$. Let
$\mathcal{J}$ be the ideal sheaf of $V_{n,d}$ in $\P^{N(n,d)}$. Also
let $\mathcal{I}_{X_d}$ and $\mathcal{J}_{X_d}$ be respectively the
ideal sheaves of $X_d$ in $V_{n,d}$ and in $\P^{N(n,d)}$. Thus there
is an exact sequence
\begin{equation}\label{eq:ses}
0 \rightarrow \mathcal{J} \rightarrow \mathcal{J}_{X_d} \rightarrow
\mathcal{I}_{X_d} \rightarrow 0
\end{equation}
of coherent sheaves on $\P^{N(n,d)}$. Since $V_{n,d}$ is
projectively normal, we obtain the short exact sequence
\begin{equation*}
0 \rightarrow I(V_{n,d}/\P^{N(n,d)} ) \rightarrow I(X_d /
\P^{N(n,d)} ) \rightarrow E:= \bigoplus_{j \geq 0} H^0 (\P^n ,
\mathcal{I}_{X} (dj)) \rightarrow 0
\end{equation*}
of graded modules on $\P^{N(n,d)}$. Note that $E$ is generated by
$E_1$ as a graded module on $\P^{N(n,d)}$ since $I(X)$ is generated
by forms of degree $\leq d$. Also $I(\P^{r(d)} /\P^{N(n,d)})$ is
contained in $I(X_d / \P^{N(n,d)})$ and
\begin{equation*}
I(\P^{r(d)} /\P^{N(n,d)})_1 = I(X_d  / \P^{N(n,d)})_1 \cong E_1 .
\end{equation*}
Obviously, $I(\P^{r(d)}/\P^{N(n,d)})$ is generated by its degree one
piece. Therefore it holds that
\begin{equation*}
I(X_d / \P^{N(n,d)}) = I(V_{n,d} /\P^{N(n,d)} ) +
I(\P^{r(d)}/\P^{N(n,d)}),
\end{equation*}
which shows exactly that $X_d$ is the ideal-theoretic intersection
of $V_{n,d} =\nu_d (\P^n )$ and $\P^{r(d)}$.

Now, $X_d \subset \P^{r(d)}$ satisfies property $\QR(3)$ since the
Veronese variety $V_{n,d} \subset \P^{N(n,d)}$ satisfies property
$\QR(3)$ and $X_d$ is its linear section. \qed \\

\begin{remark}[An ideal-theoretic version of Mumford's fundamental observation in \cite{M}]\label{rmk:ideal version of Mumford's observation}
Let $X \subset \P^n$, $m$ and $X_d \subset \P^{r(d)}$ be as in
Theorem \ref{thm:main1} and its proof. So, it is shown in the above
proof that $X_d$ is an \textit{ideal-theoretic linear section} of
the Veronese variety $V_{n,d}$ for all $d \geq m$. Obviously, this
implies that various structural properties of $V_{n,d}$ is inherited
by $X_d$. For example, it is shown in \cite[Corollary 3.5]{Pu} that
$V_{n,d}$ is determinantally presented by $(1,d-1)$-type symmetric
flattening. Therefore $(X, L^d )$ is also determinantally presented for
all $d \geq m$. More precisely, let $\Omega(L,L^{d-1})$ be the
matrix of linear forms on $\P^{r(d)}$ obtained from the natural map
\begin{equation*}
H^0 (X,L) \otimes H^0 (X,L^{d-1}) \rightarrow H^0 (X,L^d ).
\end{equation*}
Then the homogeneous ideal of $X_d \subset \P^{r(d)}$ is generated by $2$-minors of $\Omega(L,L^{d-1})$ (cf. Theorem 1.1 in \cite{SS}).
\end{remark}

Theorem \ref{thm:main1} and Remark \ref{rmk:ideal version of
Mumford's observation} imply several geometric consequences for
which we are aiming.

\begin{corollary}\label{cor:Application 1}
Suppose that ${\rm char}~\Bbbk \neq 2,3$. Let $L$ be a very ample
line bundle on a projective scheme $X$ which satisfies condition
$\mathrm{N}_1$. Then $(X,L^d )$ is determinantally presented and
satisfies property $\QR(3)$ for all $d \geq 2$.
\end{corollary}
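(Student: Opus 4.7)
The plan is to observe that condition $\mathrm{N}_1$ is precisely what is needed to invoke Theorem \ref{thm:main1} with $m=2$. By definition, $(X,L)$ satisfying $\mathrm{N}_1$ means that the linearly normal embedding $X \subset \P H^0(X,L)$ is projectively normal, hence $j$-normal for all $j \geq 1$ (and in particular for all $j \geq 2$), and moreover that $I(X)$ is generated by quadrics, i.e.\ by forms of degree $\leq 2$. These are exactly the hypotheses of Theorem \ref{thm:main1} with the integer $m$ chosen to be $2$, so the conclusion that $(X, L^d)$ satisfies property $\QR(3)$ follows at once for every $d \geq 2$.

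For the determinantal presentation claim, I would appeal to Remark \ref{rmk:ideal version of Mumford's observation}. That remark establishes, under the same hypotheses that drive Theorem \ref{thm:main1}, that the Veronese re-embedding $X_d \subset \P^{r(d)}$ is an ideal-theoretic linear section of the Veronese variety $V_{n,d} \subset \P^{N(n,d)}$ for all $d \geq m$. Specializing to $m=2$, I get that for every $d \geq 2$ the homogeneous ideal of $X_d$ is cut out by the $2$-minors of the matrix of linear forms $\Omega(L, L^{d-1})$ induced by the multiplication map $H^0(X,L) \otimes H^0(X,L^{d-1}) \to H^0(X,L^d)$, which is the required determinantal presentation.

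There is no essential obstacle here: the corollary is just the specialization of Theorem \ref{thm:main1} and Remark \ref{rmk:ideal version of Mumford's observation} to the smallest nontrivial value $m=2$, and the only thing to record is the elementary translation between condition $\mathrm{N}_1$ and the two hypotheses demanded by Theorem \ref{thm:main1} (namely $j$-normality for $j \geq m$ and generation of $I(X)$ in degrees $\leq m$). Accordingly, I expect the write-up to be only a few lines long.
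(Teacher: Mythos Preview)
Your proposal is correct and matches the paper's own proof essentially line for line: the paper also observes that condition $\mathrm{N}_1$ gives $j$-normality for all $j\ge 2$ and generation of $I(X)$ in degree $\le 2$, then invokes Theorem~\ref{thm:main1} (with $m=2$) for property $\QR(3)$ and Remark~\ref{rmk:ideal version of Mumford's observation} for the determinantal presentation.
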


\begin{proof}
By our assumption on $L$, the linearly normal embedding $X \subset
\P H^0 (X,L)$ is $j$-normal for all $j \geq 2$ and its homogeneous
ideal is generated by forms of degree $\leq 2$. Thus the assertions
come immediately from Remark \ref{rmk:ideal version of Mumford's
observation} and Theorem \ref{thm:main1}, respectively.
\end{proof}

In a similar manner, we can obtain a more general statement as follows:

\begin{corollary}\label{cor:Application 2}
Suppose that ${\rm char}~\Bbbk \neq 2,3$. Let $A$ be an ample line
bundle on a projective connected scheme $X$. Then there exists a
number $d_0=d_0(X,A)$ such that if $d$ has a proper divisor $\geq d_0$, then
$A^d$ satisfies property $\QR(3)$. In particular, $A^d$ satisfies
property $\QR(3)$ if $d$ is even and $d \geq 2d_0$.
\end{corollary}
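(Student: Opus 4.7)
The plan is to reduce everything to Corollary \ref{cor:Application 1} via the asymptotic condition $\mathrm{N}_1$ for large powers of $A$. The key input I would invoke is the classical fact (going back to Mumford, and also a consequence of Sidman--Smith's asymptotic determinantal presentation \cite{SS} used in the introduction) that for any ample line bundle $A$ on a projective connected scheme $X$, there exists a number $d_0 = d_0(X,A) \geq 1$ such that for every integer $d_1 \geq d_0$ the line bundle $A^{d_1}$ is very ample and the linearly normal embedding
\begin{equation*}
X \subset \P H^0(X, A^{d_1})
\end{equation*}
satisfies Green--Lazarsfeld's condition $\mathrm{N}_1$; that is, it is projectively normal and its homogeneous ideal is generated by quadrics. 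This is the one external fact the proof relies on, and it should be cited rather than reproved.

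Given such a $d_0$, suppose $d$ admits a proper divisor $d_1 \geq d_0$, and write $d = d_1 d_2$. Since $d_1$ is a proper divisor of $d$ we have $d_2 \geq 2$. Setting $L := A^{d_1}$, the pair $(X,L)$ is a very ample line bundle satisfying condition $\mathrm{N}_1$. Then Corollary \ref{cor:Application 1} applied to $L$ yields that $(X, L^{d_2})$ satisfies property $\QR(3)$ for all $d_2 \geq 2$. Since $L^{d_2} = A^{d_1 d_2} = A^d$, this gives exactly the desired conclusion.

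For the ``In particular'' clause: when $d$ is even and $d \geq 2d_0$, the integer $d_1 := d/2$ is a proper divisor of $d$ with $d_1 \geq d_0$, so the previous paragraph applies and $(X, A^d)$ satisfies $\QR(3)$. Note that in the formulation the constant $d_0$ serves double duty — the same $d_0$ that ensures the asymptotic $\mathrm{N}_1$ also governs the threshold $2d_0$ for the ``in particular'' statement.

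The main obstacle here is not within the body of the argument, which is essentially a bookkeeping of divisors combined with Corollary \ref{cor:Application 1}; rather, it is locating the asymptotic $\mathrm{N}_1$ property in the required generality. For smooth $X$ one can invoke Ein--Lazarsfeld \cite{EL}, but since the hypothesis here only asks that $X$ be a projective connected scheme, one needs the weaker statement (projective normality and quadratic generation of the ideal for $A^{d_1}$ with $d_1 \gg 0$) that goes back to Mumford and is now known in full generality through the determinantal presentation results of \cite{SS}. Once that input is in place, the rest of the proof is immediate.
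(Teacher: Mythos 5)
Your proposal is correct and follows essentially the same route as the paper: both arguments cite the Sidman--Smith asymptotic fact that $A^{\ell}$ satisfies condition $\mathrm{N}_1$ for all $\ell \geq d_0$, write $d = \ell k$ with $\ell \geq d_0$ a proper divisor and $k \geq 2$, and apply Corollary \ref{cor:Application 1} to $L = A^{\ell}$. The divisor bookkeeping and the treatment of the even case are identical to the paper's proof.
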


\begin{proof}
Note that a sufficiently ample line bundle on $X$ satisfies
condition $\mathrm{N}_1$ (cf. \cite[Proof of Theorem 1.1]{SS}). This
implies that there exists a number $d_0$ such that $A^d$ satisfies
condition $\mathrm{N}_1$ for every $d \geq d_0$. Now, suppose that
$d$ has a proper divisor $\ell \geq d_0$ and write $d = \ell \times
k$ where $k \geq 2$ is an integer. Then $A^d = (A^{\ell})^k$
satisfies property $\QR(3)$ since $A^{\ell}$ satisfies condition
$\mathrm{N}_1$ (cf. Corollary \ref{cor:Application 1}).
\end{proof}

We finish this section by providing a couple of applications of
Theorem \ref{thm:main1} to some classical varieties in the literature.

\begin{example}\label{ex:complete intersection}
Let $X \subset \P^n$ be a smooth complete intersection of
hypersurfaces $F_1 , \ldots , F_{c}$ of degrees $d_1 \geq \cdots
\geq d_c \geq 2$. Also let $L:=\mathcal{O}_X (1)$. Thus $X$ is
projectively normal and its homogeneous ideal is generated by forms
of degree $\leq d_1$.

\noindent $(1)$ By Theorem \ref{thm:main1}, it holds that $L^d$
satisfies property $\QR(3)$ for all $d \geq d_1$.

\noindent $(2)$ Recall that $K_X = \mathcal{O}_X (d_1 + \ldots + d_c
-n-1)$. Thus, if $c \geq 2$ and $d_2 + \cdots + d_c \geq n+1$ then
the canonical embedding of $X$ satisfies property $\QR(3)$ by $(1)$.

\noindent $(3)$ If $n \geq 5$ and $X$ is a curve, then it holds
always that $d_2 + \cdots + d_{n-1} \geq n+1$. Thus, by $(2)$, we
can see that there are $\infty$-many canonical curves which
satisfy property $\QR(3)$ (see also Example \ref{ex:canonical curve}).
\end{example}

\begin{example}\label{ex:Grassmannian}
Let $X={\rm Gr}(k,V)$ be the Grassmannian manifold of
$k$-dimensional subspaces of the $n$-dimensional $\Bbbk$-vector
space $V$ where $n \geq 3$ and $1 \leq k \leq n-2$. Also let $L$ be
the generator of ${\rm Pic}~X$ which defines the Pl\"ucker embedding
of $X$.

\noindent $(1)$ Let ${\rm Gr}(k,n) \subset \P (\bigwedge^k V)$ be
the Pl\"ucker embedding. Then it contains ${\rm Gr}(2,4)$ as a
linear section (cf. \cite[Chapter 6]{Harris}). Thus $L$ fails to
satisfy property $\QR(5)$ since ${\rm Gr}(2,4) \subset \P^5$ is a
hyperquadric of rank $6$. On the other hand, it is shown in
\cite{KPRS} that ${\rm Gr}(k,n)$ is \textit{set-theoretically} cut out by
quadratic equations of rank $6$.

\noindent $(2)$ It is well-known that $L$ satisfies condition
$\mathrm{N}_2$ (cf. Proposition 3.8 and Remark 3.9.(1) in
\cite{EGHP}). Therefore it follows by Theorem \ref{thm:main1} that
$({\rm Gr}(k,n),L^d )$ satisfies property $\QR(3)$ for all $d \geq
2$.
\end{example}

For the remaining part of this section, we assume that the
characteristic of $\Bbbk$ is zero.

\begin{example}\label{ex:Abelian}
Let $X$ be an abelian variety and let $A$ be an ample line bundle on
$X$. The main theorem in \cite{Pare} implies that $A^d$ satisfies
condition $\mathrm{N}_{d-3}$ for all $d \geq 4$. Then it follows by
Corollary \ref{cor:Application 2} and its proof that $(X,A^d )$
satisfies property $\QR(3)$ whenever $d \geq 8$ and neither $d \neq
9$ nor $d$ is a prime (that is, $d$ has a proper divisor $\geq 4$).
\end{example}

\begin{example}\label{ex:Enriques}
Let $S \subset \P^{g-1}$ be a linearly normal Enriques surface and
let $L:=\mathcal{O}_S (1)$. It is well-known that $g \geq 6$. Also
$(S,L)$ is called a \textit{Reye polarization} if $g=6$ and $S$
fails to be $2$-normal, or equivalently it lies on a quadric. In
\cite{GLM}, the authors prove the following results:
\begin{enumerate}
\item[$a.$] If $(S,L)$ is a Reye polarization, then $S \subset \P^5$ is $4$-regular and its homogeneous ideal is
generated by forms of degree $\leq 3$.
\item[$b.$] If $(S,L)$ is not a Reye polarization, then $S \subset  \P^{g-1}$ is $3$-regular.
\end{enumerate}
Thus Theorem \ref{thm:main1} shows that $(S,L^d )$ satisfies
property $\QR(3)$ for all $d \geq 3$.
\end{example}

\begin{example}\label{ex:K3}
Let $S \subset \P^{g}$ be a linearly normal K3 surface and let
$L:=\mathcal{O}_S (1)$. Recall that a general hyperplane section
$\mathcal{C} \subset \P^{g-1}$ of $S$ is a canonical curve of genus
$g$. Thus $S$ is projectively normal and its homogeneous ideal is
generated by quadratic and cubic equations. Now, Theorem
\ref{thm:main1} shows that $(S,L^d )$ satisfies property $\QR(3)$
for all $d \geq 3$. In particular, any general hyperplane section of
$S \subset \P H^0 (S,L^d)$ is a canonical curve satisfying property
$\QR(3)$.
\end{example}

\section{Further discussions and Open problems}\label{sect_eg_prbm}

\noindent In the final section, we will discuss some open questions related to our main results in the present paper.\\

\noindent{\textsf{Positivity and Rank $3$ quadratic generation~}}
Let $L$ be a very ample line bundle on a projective scheme $X$.
There have been many interesting results showing that the positivity
of $L$ is reflected in the defining equations of $X \subset \P H^0
(X,L)$ and the syzygies among them. To state more precisely, due to
\cite[Definition 3.1]{Gre2}, we say that a property $\mathcal{P}$
holds for every \textit{sufficiently ample} line bundle on $X$ if
there exists a line bundle $A$ on $X$ such that $\mathcal{P}$ holds
for all line bundles $L \in {\rm Pic}(X)$ for which $L \otimes
A^{-1}$ is ample. For example, let $p$ be a positive integer.
Theorem 1 in \cite{EL} shows that when $X$ is a smooth complex
variety, every sufficiently ample line bundle on $X$ satisfies
condition $\mathrm{N}_p$. Recently, Sidman and Smith in \cite{SS}
prove that every sufficiently ample line bundle on a connected
scheme is determinantally presented. In this direction, our main
results in this paper allude to a significant correlation
between some positive nature of the very ample line bundle $L$ on
$X$ and the property $\QR(3)$ of $(X,L)$, just like in many works on condition $\mathrm{N}_p$ and determinantal presentation of projective embeddings. For instance, a sufficiently ample line bundle on a projective connected scheme $X$ satisfies condition $\mathrm{N}_1$
(cf. \cite[Proof of Theorem 1.1]{SS}). For all those sufficiently
ample line bundles $L$, it follows by Corollary \ref{cor:Application
1} that $(X,L^2 )$ satisfies property $\QR (3)$. This observation
leads us naturally to formulate

\begin{conjecture}\label{con:Veronese variety arbitrary}
Every sufficiently ample line bundle on a projective scheme
satisfies property $\QR(3)$.
\end{conjecture}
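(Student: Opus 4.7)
The plan is to extend Corollary \ref{cor:Application 2}, which yields property QR(3) only for powers $A^{d}$ with $d$ having a sufficiently large proper divisor, to arbitrary sufficiently ample line bundles $L$ on $X$. By \cite[Proof of Theorem 1.1]{SS} there is a line bundle $A_{0}$ on $X$ such that every $L$ with $L\otimes A_{0}^{-1}$ ample satisfies condition $\mathrm{N}_{1}$. I would first enlarge $A_{0}$ so that any such $L$ admits a tensor factorization $L=L_{1}^{\otimes 2}\otimes L_{2}$ in which both $L_{1}$ and $L_{2}$ are themselves very ample and satisfy $\mathrm{N}_{1}$; this is possible because the Sidman--Smith type positivity propagates to prescribed factors once one starts from a sufficiently ample bundle. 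Once such a decomposition is fixed, I would deploy the Q-map $Q_{L_{1},L_{2}}$ of Notation \ref{nar:producingQofR3}, whose image populates $I(X)_{2}$ with rank $3$ quadrics and is spanned by the finite set $\Gamma(L_{1},L_{2})$ of Theorem \ref{thm:producingR3Qs}.

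The heart of the argument would be to prove $\mathfrak{Q}_{L_{1},L_{2}}=I(X)$. For this I would imitate the ideal-theoretic linear section argument of Theorem \ref{thm:main1}: the linearly normal embedding of $X$ by $L$ factors as
\[
X \hookrightarrow \P H^{0}(X,L_{1})\times \P H^{0}(X,L_{2}) \xrightarrow{\nu_{2}\times\mathrm{id}} V_{p-1,2}\times \P^{q-1} \longrightarrow \P H^{0}(X,L),
\]
where $p=h^{0}(X,L_{1})$ and $q=h^{0}(X,L_{2})$, and the final arrow is the linear projection induced by the surjection $H^{0}(L_{1}^{\otimes 2})\otimes H^{0}(L_{2})\twoheadrightarrow H^{0}(L)$. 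Under the $\mathrm{N}_{1}$ hypotheses on both $L_{1}$ and $L_{2}$ this factorization is ideal-theoretic, realizing $X \subset \P H^{0}(X,L)$ as an ideal-theoretic linear section of the Segre-Veronese variety $V_{p-1,2}\times \P^{q-1}$. Since linear sections inherit property QR(3), this reduces the conjecture to proving QR(3) for $V_{p-1,2}\times \P^{q-1}$ under its $\mathcal{O}(1,1)$-embedding.

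The main obstacle is precisely the QR(3) property of this intermediate Segre-Veronese variety, which is not established in the present paper. I expect a proof by induction on $q\geq 1$: the base case $q=1$ is the QR(3) statement for $V_{n,2}$ proved in Theorem \ref{thm:secondVeronese}, and the inductive step should follow a combinatorial strategy modelled on Section \ref{sect4}, with analogues of the six symbolic relations in Lemma \ref{lem:gamma} rewritten for the bigraded index set $A(p-1,2)\times A(q-1,1)$ and the $\PGL_{p}\times\PGL_{q}$-symmetry of the Segre-Veronese playing the role that $\PGL_{n+1}$ played for the pure Veronese. Granting that intermediate QR(3) statement, the ideal-theoretic linear section argument above then delivers property QR(3) for every sufficiently ample $L$, confirming Conjecture \ref{con:Veronese variety arbitrary}.
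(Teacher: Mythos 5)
The statement you are addressing is stated in the paper as Conjecture~\ref{con:Veronese variety arbitrary}; the paper offers no proof of it, only partial evidence (Theorem~\ref{thm:main1}, Corollary~\ref{cor:Application 2}, and the observation that the conjecture holds when ${\rm Pic}(X)$ is generated by a very ample line bundle). So there is no proof in the paper to compare against, and your proposal has to stand on its own. It does not: the intermediate statement you propose to ``grant'' at the end is false according to the paper itself.

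Your reduction realizes $X\subset \P H^0(X,L)$ as a linear section of the Segre--Veronese variety $V_{p-1,2}\times\P^{q-1}$, i.e.\ the image of $\P^{p-1}\times\P^{q-1}$ under $\mathcal{O}(2,1)$, and the whole argument then rests on that ambient variety satisfying $\QR(3)$. But in $\S$~\ref{sect_eg_prbm}, item (3) following the conjecture, the authors report (citing \cite{HLMP2}) that $\mathcal{O}(d_1,\ldots,d_k)$ on a product of projective spaces \emph{fails} property $\QR(3)$ whenever some $d_i=1$. Your ambient model has $d_2=1$, so for every $q\geq 2$ it fails $\QR(3)$; your induction is sound only at the base case $q=1$, which is just Theorem~\ref{thm:secondVeronese}. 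A linear section of a variety without $\QR(3)$ can of course still satisfy $\QR(3)$, since its ideal is strictly larger, but then the inheritance argument yields nothing and the reduction collapses. There is a second, independent gap: the assertion that the factorization through $V_{p-1,2}\times\P^{q-1}$ is \emph{ideal-theoretic} does not follow from $L_1$ and $L_2$ satisfying condition $\mathrm{N}_1$; it is a determinantal-presentation-type statement about the multiplication map $H^0(L_1^{\otimes 2})\otimes H^0(L_2)\to H^0(L)$ that would itself require an argument in the style of \cite{SS}, and is not supplied. The conjecture remains open, and the obstruction your proposal runs into --- that the natural bihomogeneous ambient models sitting on the boundary of the ample cone fail $\QR(3)$ --- is precisely the difficulty the authors single out.
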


\noindent $(1)$ The above conjecture is shown to be true by Theorem
\ref{thm:Veronese1} and Theorem \ref{thm:main1} when $\mbox{Pic}(X)$
is generated by a very ample line bundle (e.g. $X=\P^n$ or a
Grassmannian manifold).\\

\noindent $(2)$ In \cite{PaCurve}, the above conjecture is proved
for any projective integral curve $\mathcal{C}$ of arithmetic genus
$g$ by showing that property $\QR(3)$ holds for
$(\mathcal{C},\mathcal{L})$ whenever $\deg(\mathcal{L}) \ge 4g+4$.
Note that if $\mathcal{L} = \mathcal{M}^2$ and $\deg(\mathcal{M})
\geq 2g+2$, then $(\mathcal{C},\mathcal{L})$ satisfies property
$\QR(3)$ by Corollary \ref{cor:Application 1} since
$(\mathcal{C},\mathcal{M})$ satisfies condition $\mathrm{N}_{1}$.\\

\noindent $(3)$ Among aforementioned classical varieties, we intend
to treat Segre-Veronese embeddings and rational normal scrolls in
a forthcoming paper \cite{MP2}. Let $X = \P^{n_1} \times \cdots
\times \P^{n_k}$ where $k \geq 2$ and $n_1 , \ldots , n_k \geq 1$.
Every very ample line bundle on $X$ satisfies property $\QR(4)$. In
\cite{MP2}, we prove that $\mathcal{O}(d_1 , \ldots , d_k )$ (with
$d_1 , \ldots , d_k \ge 1$) fails to satisfy property $\QR(3)$ if
$d_i =1$ for some $1 \leq i \leq k$. This means that all line
bundles on the boundary of the ample cone of $X$ fail to satisfy
property $\QR(3)$.\\

\noindent{\textsf{Property $\QR(3)$ of canonical curves~}} Let
$\mathcal{C} \subset \P^{g-1}$ be a smooth canonical curve of genus
$g$ satisfying condition $\mathrm{N}_1$. M. Green in \cite{Gre}
proved that property $\QR(4)$ holds. Regarding property $\QR(3)$, we
have a computational example of some canonical curve of $g=6$ which
fail to satisfy property $\QR(3)$. See Example \ref{ex:canonical
curve}. On the other hand, our main results produce infinitely many
examples of canonical curves satisfying property $\QR(3)$ (cf.
Example \ref{ex:complete intersection} and Example \ref{ex:K3}).
Thus it seems an interesting open question whether property $\QR(3)$
holds for canonical curves of sufficiently large genus.

\begin{example}[A canonical curve without property $\QR(3)$]\label{ex:canonical curve}
Let $S:=\mathbb{K}[x_1,x_2,x_3,x_4,x_5,x_6]$ be the homogeneous coordinate ring of $\P^5$. Consider the homogeneous ideal $I =\langle Q_1 ,Q_2 ,Q_3 ,Q_4 ,Q_5 ,Q_6 \rangle$ of $S$ where
$$\begin{cases}
Q_1 =-x _2^2 +x _3 x _1 +x _3^2 -2x _4 x _2 +2x _5 x _1 +2x _5 x _3 -2x _5 x _4 -3x _6 x _1 -x _6 x _2\\ \quad\quad\quad -3x _6 x _3 +4x _6 x _4 + 3x _6 x _5 -8x _6^2 ,\\
Q_2 =-x _3 x _2 +2x _3^2 +x _4 x _1 -3x _4 x _2 +4x _5 x _1 +4x _5 x _3 +2x _5 x _4 +x _6 x _1 +5x _6 x _2\\ \quad\quad\quad -8x _6 x _3 +7x _6 x _4 +2x 6 x _5 +5x _6^2 ,\\
Q_3 = -2x_3^2 +2x_4 x_2 +x_4 x_3 -3x_5 x_1 -4x_5 x_3 -3x_5 x_4 -4x_6 x_1 -6x_6 x_2 \\
\quad\quad\quad +6x_6 x_3 -4x_6 x_4 -x_6 x_5 -10x_6^2 ,\\
Q_4 = -x_3 ^2 +x_4 x_2 +x_4 ^2 -x_5 x_1 -3x_5 x_3 -3x_5 x_4 -6x_6 x_1 -5x_6 x_2 +3x_6 x_3 \\
\quad\quad\quad-x_6 x_4 +x_6 x_5 -6 x_6 ^2\\
Q_5 = -x_3 ^2 +x_4 x_2 -x_5 x_1 +x_5 x_2 +2x_5 x_4 +7x_6 x_1 +3x_6 x_2 +4x_6 x_3 \\
\quad\quad\quad -6x_6 x_4 -2x_6 x_5 +6x_6 ^2\\
Q_6 = x_5^2 -x_6 x_1 +2x_6 x_2 +x_6 x_3 -2x_6 x_4~~.
\end{cases}$$
Then $\mathcal{C} := Z(I) \subset \P^5$ is a curve of genus $6$ and degree $10$. Indeed, it is the canonical embedding of the modular curve $X_0 (58)$. Furthermore, up to scalar multiplication, $Q_6$ is the only quadratic equation of rank $3$ in $I$. Therefore $\mathcal{C}$ fails to satisfy property $\QR(3)$. All computations are provided by the computer algebra systems \texttt{Macaulay2} \cite{GS}.
\end{example}

\end{document}